\newtheorem{theorem}{Theorem}
\newtheorem{proposition}{Proposition}
\newtheorem{corollary}{Corollary}
\theoremstyle{plain}
  \newtheorem{ques}{Question}
\theoremstyle{definition}
\theoremstyle{remark}
\begin{document}

\title[Pure cactus groups and configuration spaces of points]{Pure cactus groups and configuration spaces of points on the circle}

\author{Takatoshi Hama}
\address{Graduate School of Integrated Basic Sciences, Nihon University, 3-25-40 Sakurajosui, Setagaya-ku, Tokyo 156-8550, Japan}
\email{chta23018@g.nihon-u.ac.jp}

\author{Kazuhiro Ichihara}
\address{Department of Mathematics, 
College of Humanities and Sciences, Nihon University, 3-25-40 Sakurajosui, Setagaya-ku, Tokyo 156-8550, Japan}
\email{ichihara.kazuhiro@nihon-u.ac.jp}

\date{\today}

\keywords{Cactus group, Cayley complex, Dirichlet polygon}

\subjclass[2020]{20F65, 20F38, 05E18, 57M60}

\begin{abstract}
In this article, we provide a summary of the results presented in the previous two papers of the authors and in the second author’s Master thesis, which concern pure cactus groups and configuration spaces of points on the circle.
\end{abstract}

\maketitle

\section{Introduction}

The aim of this article is to provide a summary of the results presented in \cite{HamaIchiharaPJ3, hama2025presentationpurecactusgroup} and in the second author's thesis \cite{HamaMsthesis}, which concern pure cactus groups and configuration spaces of points on the circle.

It is known that the braid group acts naturally on multiple tensor products in a braided monoidal category.
As an analogue of this action, Henriques and Kamnitzer introduced the so-called \textit{cactus group} in \cite{HENRIQUES-KAMNITZER}, which acts on coboundary categories associated with braided monoidal categories.

We note that their work was motivated by the study of crystal bases, introduced by Kashiwara in \cite{Kashiwara}.
Although the term “cactus group” was first coined in \cite{HENRIQUES-KAMNITZER}, the group itself had already been studied in earlier works, including \cite{DJS03} and \cite{devadoss2000tessellationsmodulispacesmosaic}.

More precisely, for any integer $n \geq 2$, the \textit{cactus group of degree $n$}, denoted by $J_n$, is defined by a presentation with generators $s_{p,q}$ for $1 \leq p < q \leq n$, subject to the following relations:
\begin{itemize}
\item $s_{p,q}^2 = e$ for all $1 \leq p < q \leq n$,
\item $s_{p,q}s_{m,r} = s_{m,r}s_{p,q}$ if $[p, q] \cap [m, r] = \emptyset$,
\item $s_{p,q}s_{m,r} = s_{p+q-r,p+q-m}s_{p,q}$ if $[m, r] \subset [p, q]$,
\end{itemize}
where $1 \leq m < r \leq n$, $e$ denotes the identity element, and $[p, q]$ is the set ${p, p+1, \dots, q}$ for integers $p < q$.

Similar to the braid group, elements of the cactus group can be represented by planar diagrams composed of vertical strands.
Examples of such diagrams for $J_4$ are shown in Figure~\ref{Fig:element}.

\begin{figure}[htb]
\centering
    \begin{picture}(340,70)(0,0)
    \put(-27,0){
    \begin{tikzpicture}

\draw[ultra thick] (0,0) .. controls +(0,1) and +(0,-1) ..
(.5,2) ;
\draw[ultra thick] (.5,0) .. controls +(0,1) and +(0,-1) ..
(0,2) ;
\draw[ultra thick] (1,0) .. controls +(0,0) .. (1,2);
\draw[ultra thick] (1.5,0) .. controls +(0,0) .. (1.5,2);

\draw[ultra thick] (3,0) .. controls +(0,0) .. (3,2);
\draw[ultra thick] (3.5,0) .. controls +(0,1) and +(0,-1) ..
(4,2) ;
\draw[ultra thick] (4,0) .. controls +(0,1) and +(0,-1) ..
(3.5,2) ;
\draw[ultra thick] (4.5,0) .. controls +(0,0) .. (4.5,2);

\draw[ultra thick] (6,0) .. controls +(0,0) .. (6,2);
\draw[ultra thick] (6.5,0) .. controls +(0,0) .. (6.5,2);
\draw[ultra thick] (7,0) .. controls +(0,1) and +(0,-1) ..
(7.5,2) ;
\draw[ultra thick] (7.5,0) .. controls +(0,1) and +(0,-1) ..
(7,2) ;

\draw[ultra thick] (9,0) .. controls +(0,1) and +(0,-1) ..
(10,2) ;
\draw[ultra thick] (9.5,0) .. controls +(0,0) .. (9.5,2);
\draw[ultra thick] (10,0) .. controls +(0,1) and +(0,-1) ..
(9,2) ;
\draw[ultra thick] (10.5,0) .. controls +(0,0) .. (10.5,2);

\draw[ultra thick] (12,0) .. controls +(0,1) and +(0,-1) ..
(13.5,2) ;
\draw[ultra thick] (12.5,0) .. controls +(0,1) and +(0,-1) ..
(13,2) ;
\draw[ultra thick] (13,0) .. controls +(0,1) and +(0,-1) ..
(12.5,2) ;
\draw[ultra thick] (13.5,0) .. controls +(0,1) and +(0,-1) ..
(12,2) ;
\end{tikzpicture}
}
    \put(340,-10){$s_{14}$}
    \put(255,-10){$s_{13}$}
    \put(0,-10){$s_{12}$}
    \put(85,-10){$s_{23}$}
    \put(170,-10){$s_{34}$}

\end{picture}

\vspace{1.5cm}

   \begin{picture}(350,80)(0,0)
    \put(20,0){
\begin{tikzpicture}
\draw[ultra thick] (0,0) .. controls +(0,1) and +(0,-1) ..
(1,1.5) .. controls +(0,1) and +(0,-1) .. (0,3);
\draw[ultra thick] (1,0) .. controls +(0,1) and +(0,-1) ..
(0,1.5) .. controls +(0,1) and +(0,-1) .. (1,3);
\draw[ultra thick] (.5,0) .. controls +(0,0) .. (.5,3);
\draw[ultra thick] (1.4,0) .. controls +(0,0) .. (1.4,3);

\draw[ultra thick] (2.5,0) .. controls +(0,0) .. (2.5,3);
\draw[ultra thick] (3,0) .. controls +(0,0) .. (3,3);
\draw[ultra thick] (3.5,0) .. controls +(0,0) .. (3.5,3);
\draw[ultra thick] (4,0) .. controls +(0,0) .. (4,3);

\draw[ultra thick] (6,0) .. controls +(0,1) and +(0,-1) ..
(6.5,1.5) .. controls +(0,1) and +(0,-1) .. (7,3);
\draw[ultra thick] (6.5,0) .. controls +(0,1) and +(0,-1) ..
(6,1.5) .. controls +(0,1) and +(0,-1) .. (7.5,3);
\draw[ultra thick] (7,0) .. controls +(0,0) ..
(7,1.5) .. controls +(0,1) and +(0,-1) .. (6.5,3);
\draw[ultra thick] (7.5,0) .. controls +(0,0) ..
(7.5,1.5) .. controls +(0,1) and +(0,-1) .. (6,3);

\draw[ultra thick] (9,0) .. controls +(0,1) and +(0,-1) ..
(10.5,1.5) .. controls +(0,1) and +(0,-1) .. (10,3);
\draw[ultra thick] (9.5,0) .. controls +(0,1) and +(0,-1) ..
(10,1.5) .. controls +(0,1) and +(0,-1) .. (10.5,3);
\draw[ultra thick] (10,0) .. controls +(0,1) and +(0,-1) ..
(9.5,1.5) .. controls +(0,0) .. (9.5,3);
\draw[ultra thick] (10.5,0) .. controls +(0,1) and +(0,-1) ..
(9,1.5) .. controls +(0,0) .. (9,3);

\end{tikzpicture}
}
    \put(77,40){\large $=$}
    \put(55,-15){$ ( s_{13} )^2 = e$}
    
    \put(255,40){\large $=$}
    \put(240, -15){$s_{14}s_{12} = s_{34} s_{14}$}
\end{picture}

\vspace{.5cm}

\caption{Diagrams for some elements of $J_4$}\label{Fig:element}
\end{figure}
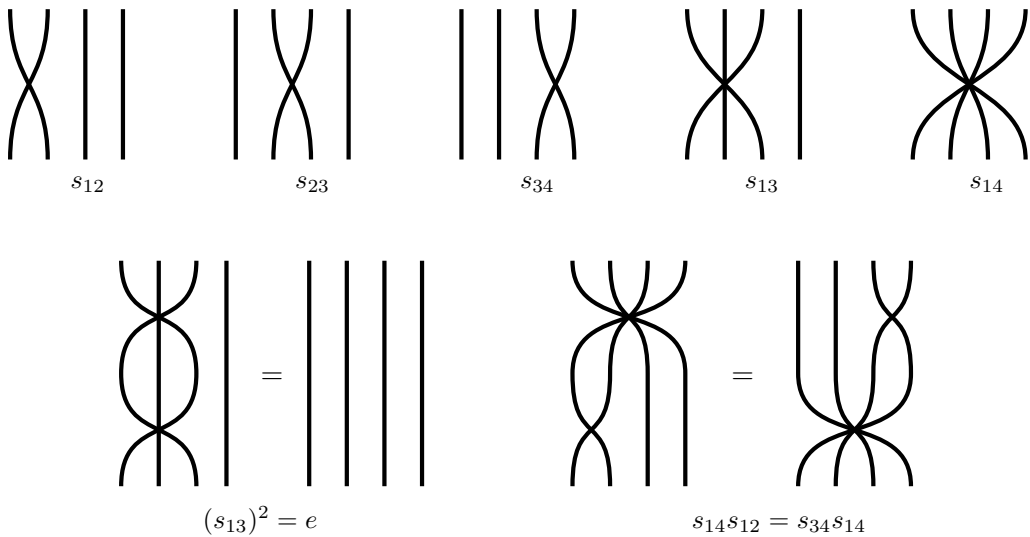

Owing to this diagrammatic representation, the cactus group $J_n$ admits a natural projection $\pi : J_n \to S_n$ onto the symmetric group $S_n$ of degree $n$.
The kernel of this projection is called the \textit{pure cactus group} of degree $n$, denoted by $PJ_n$.
For further details, see \cite[Subsection 3.1]{HENRIQUES-KAMNITZER} or \cite[Section 1]{genevois2022cactusgroupsviewpointgeometric}.

Also, in \cite[Theorem 9]{HENRIQUES-KAMNITZER}, it is shown that the pure cactus group $PJ_n$ is isomorphic to the fundamental group of the Deligne--Mumford compactification $\overline{M_{0,n+1}(\mathbb{R})}$
of the moduli space of real genus-zero curves with $n+1$ marked points.
This space is closely related to the configuration space of $n+1$ points on the circle.

In this article, we report our study the relationship between the pure cactus group and the compactification of the configuration space of points on the circle, focusing on the cases where the degree is three or four.

Throughout this paper, we denote the generators of $J_n$ using the notation $s_{23}$ (without a comma) instead of $s_{2,3}$, for brevity.

\section{Configuration spaces of points on $S^1$}

As mentioned above, by \cite[Theorem 9]{HENRIQUES-KAMNITZER}, the pure cactus group $PJ_n$ is isomorphic to the fundamental group of the Deligne--Mumford compactification $\overline{M_{0,n+1}(\mathbb{R})}$
of the moduli space of real genus-zero curves with $n+1$ marked points.

For $k \geq 3$, it is natural to regard $M_{0,k}(\mathbb{R})$ as the configuration space of $k$ distinct points on the circle $S^1$.
In this paper, we denote this space by $X(k)$, which can be explicitly described as:
\begin{align*}
X(k) = \mathrm{PGL}(2) \backslash \left\{ (\mathbb{P}^1)^k - \Delta \right\},
\end{align*}
where $\Delta = {(x_1, \dots, x_k) \mid x_i = x_j\ \text{for some} \ i \neq j}$, and the projective general linear group $\mathrm{PGL}(2)$ acts diagonally and freely.
That is, there exists a homeomorphism between $M_{0,k}(\mathbb{R})$ and $X(k)$.

On the other hand, a purely combinatorial compactification $\overline{X(k)}$ was introduced by M.~Yoshida in \cite{MYos96}.
See also \cite{KNY99} and \cite{MN00} for related work.

At present, it is not known whether $\overline{X(k)}$ is homeomorphic to $\overline{M_{0,k}(\mathbb{R})}$ in general.
However, for $k = 3, 4, 5$, the following are known:
\begin{itemize}
\item Both $\overline{X(3)}$ and $\overline{M_{0,3}(\mathbb{R})}$ consist of a single point.
\item Both $\overline{X(4)}$ and $\overline{M_{0,4}(\mathbb{R})}$ are homeomorphic to $S^1$.
\item Both $\overline{X(5)}$ and $\overline{M_{0,5}(\mathbb{R})}$ are homeomorphic to the closed non-orientable surface with Euler characteristic $-3$, namely, the connected sum of five projective planes.
\end{itemize}
See \cite{MYos96, AperyYoshida, EHKR}, for example.
This implies that, for $n \le 4$, the pure cactus group $PJ_n$ is isomorphic to the fundamental group of $\overline{X(n+1)}$, using $\overline{M_{0,n+1}(\mathbb{R})}$ as an intermediate space.
Thus, the following question arises naturally:

\medskip

\noindent
\textbf{Question.}
Can one show that $PJ_n$ is isomorphic to $\pi_1(\overline{X(n+1)})$ without using $\overline{M_{0,n+1}(\mathbb{R})}$\;?

\medskip

The following theorem provides an affirmative answer in the case $n = 3$.

\begin{theorem}[{\cite{HamaIchiharaPJ3}}]
Let $\widetilde{X(4)}$ be the universal cover of the space $\overline{X(4)}$, endowed with the action $\widetilde{\Gamma}$ of the fundamental group $\pi_1 (\overline{X(4)} )$ of $\overline{X(4)}$ as deck transformations. 
Let ${C_3}^{\{ 2\}}$ be the Cayley complex of the subgroup $J_3^{\{2\}}$ of the cactus group $J_3$. 
Then there exists an action $\Gamma$ of $PJ_3$ on ${C_3}^{\{2\}}$ and a bijective equivariant map $\varphi$ from $\widetilde{X(4)}$ to ${C_3}^{\{2\}}$ with respect to the actions of $\widetilde{\Gamma}$ and $\Gamma$, 
i.e., for any $g \in PJ_3$, there exists $\tilde{g} \in \pi_1 (\overline{X(4)} )$ such that the following diagram commutes;

\[
\begin{CD}
\widetilde{X(4)} @>{\widetilde{\Gamma}_{\tilde{g}}}>>\widetilde{X(4)}\\
\varphi@VVV   @VVV\varphi \\
{C_3}^{\{2\}} @>{\Gamma_g}>> {C_3}^{\{2\}}
\end{CD}
\]
\end{theorem}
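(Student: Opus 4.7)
The plan exploits the fact that both spaces involved are one-dimensional with a $\mathbb{Z}$-action. Since $\overline{X(4)} \cong S^1$, its universal cover is $\widetilde{X(4)} \cong \mathbb{R}$ with deck group $\pi_1(\overline{X(4)}) \cong \mathbb{Z}$ acting by translations. The Henriques--Kamnitzer theorem combined with $\overline{M_{0,4}(\mathbb{R})} \cong S^1$ forces $PJ_3 \cong \mathbb{Z}$ abstractly, so the coarse agreement of the two groups is automatic; the content of the theorem is to promote this to a concrete cellular equivariant bijection.

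First, I would extract a CW decomposition of $\overline{X(4)}$ directly from Yoshida's construction. The three cyclic orderings of four labelled points on $S^1$, taken up to the diagonal action of $\mathrm{PGL}(2,\mathbb{R})$, give three open $1$-cells of $X(4)$, and Yoshida's combinatorial compactification adds $0$-cells corresponding to the degenerate configurations where two adjacent points coalesce; these glue into $S^1$. Lifting this CW structure to $\widetilde{X(4)} \cong \mathbb{R}$ yields a bi-infinite chain of edges and vertices on which $\pi_1(\overline{X(4)})$ acts by a translation whose period equals the number of cells in a fundamental domain.

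Next, I would compute the subgroup $J_3^{\{2\}}$ and its Cayley complex $C_3^{\{2\}}$ directly from the presentation of $J_3$ with generators $s_{12}, s_{13}, s_{23}$ and the three families of relations listed in the introduction, and verify that $C_3^{\{2\}}$ is likewise a bi-infinite linear complex: vertices indexed by cosets of $J_3^{\{2\}}$ in $J_3$, and edges labelled by the three generators. The map $\varphi$ is then defined cellularly by fixing a correspondence on a single fundamental domain (base vertex to base vertex, base edges to edges carrying the matching generator labels) and extending equivariantly via the two $\mathbb{Z}$-actions. Bijectivity is immediate once the two CW structures are shown to be abstractly isomorphic, and equivariance reduces to checking that a chosen generator of $PJ_3$ acts on $C_3^{\{2\}}$ by the same period-shift as the matching generator of $\pi_1(\overline{X(4)})$ acts on $\widetilde{X(4)}$.

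The main obstacle will be pinning down the combinatorial dictionary between the generators $s_{p,q}$ of $J_3$ and the collision strata of Yoshida's compactification. The three defining relations of $J_3$, in particular the non-trivial conjugation relation $s_{p,q}s_{m,r} = s_{p+q-r,\,p+q-m}s_{p,q}$ for $[m,r] \subset [p,q]$, must be reflected consistently in how Yoshida's boundary cells glue after $\mathrm{PGL}(2,\mathbb{R})$-normalisation. Once this dictionary is established, the commutativity of the diagram for an arbitrary $g \in PJ_3$ follows by induction on the word length of $g$ in the generators of $J_3$, since both sides of the equation $\varphi \circ \widetilde{\Gamma}_{\tilde g} = \Gamma_g \circ \varphi$ are compatible with the respective $\mathbb{Z}$-actions generated by a single translation.
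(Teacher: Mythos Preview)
Your proposal has two substantive problems that together undermine the argument.

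First, the paper's explicit purpose for this theorem is to establish $PJ_3\cong\pi_1(\overline{X(4)})$ \emph{without} passing through $\overline{M_{0,4}(\mathbb{R})}$ and the Henriques--Kamnitzer result (see the Question immediately preceding the theorem). Your first paragraph invokes exactly that result to conclude $PJ_3\cong\mathbb{Z}$ in advance. Once you have assumed $PJ_3\cong\mathbb{Z}$, the rest of the construction is essentially tautological: any two free $\mathbb{Z}$-actions on the line are conjugate, so the commutative diagram is forced. That is circular relative to the paper's stated goal, and the Corollary would no longer be a consequence of the theorem but an input to it.

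Second, you have misidentified the Cayley complex. The group $J_3^{\{2\}}$ is generated by $s_{12}$ and $s_{23}$ only (the generator $s_{13}$ is excluded because $3-1+1=3\notin\{2\}$), with presentation $\langle s_{12},s_{23}\mid s_{12}^2=s_{23}^2=e\rangle\cong\mathbb{Z}_2*\mathbb{Z}_2$. Hence the vertices of $C_3^{\{2\}}$ are the elements of this infinite dihedral group, not cosets of $J_3^{\{2\}}$ in $J_3$, and the edges carry two labels, not three. More importantly, because $PJ_3$ is not a subgroup of $J_3^{\{2\}}$, there is no tautological left action of $PJ_3$ on $C_3^{\{2\}}$; producing the action $\Gamma$ is the actual content of the theorem. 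The paper supplies it via Proposition~\ref{prop}: for $g\in PJ_3$ and a vertex $h\in J_3^{\{2\}}$ one sets $\Gamma_g(h)=gh$ if $gh\in J_3^{\{2\}}$ and $\Gamma_g(h)=gh\,s_{13}$ otherwise, using that $s_{13}$ generates a complement to $J_3^{\{2\}}$ in $J_3$. With this concrete $\Gamma$ in hand one checks directly (no appeal to Henriques--Kamnitzer) that $PJ_3$ acts freely and cocompactly on the line $C_3^{\{2\}}$, which both proves $PJ_3\cong\mathbb{Z}$ and yields the equivariant bijection with $\widetilde{X(4)}$. Your outline never produces $\Gamma$, so the diagram you want to verify is not yet defined.
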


The theorem above implies the following immediately. 

\begin{corollary}
The pure cactus group $PJ_3$ of degree three is isomorphic to the fundamental group $\pi_1 (\overline{X(4)} )$ of the compactification $\overline{X(4)}$ of the configuration space of four points on the circle. 
\end{corollary}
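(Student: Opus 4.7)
The plan is to extract the isomorphism directly from the bijective equivariant map $\varphi$ provided by the preceding theorem. Define $\Phi : PJ_3 \to \pi_1(\overline{X(4)})$ by sending each $g \in PJ_3$ to the element $\tilde g$ whose existence is asserted by the theorem. The first step is to verify that $\tilde g$ is uniquely determined by $g$: since $\pi_1(\overline{X(4)})$ acts freely on its universal cover $\widetilde{X(4)}$ as deck transformations, any such transformation is pinned down by its effect at a single point, so the identity $\widetilde{\Gamma}_{\tilde g} = \varphi^{-1} \circ \Gamma_g \circ \varphi$ leaves no ambiguity for $\tilde g$. With this in place, the fact that $\Phi$ is a group homomorphism follows by writing the commutative square for the product $g_1 g_2$ and noting that $\Gamma_{g_1 g_2} = \Gamma_{g_1}\circ \Gamma_{g_2}$ transports through $\varphi$ to $\widetilde{\Gamma}_{\tilde g_1}\circ \widetilde{\Gamma}_{\tilde g_2} = \widetilde{\Gamma}_{\tilde g_1 \tilde g_2}$; uniqueness then yields $\Phi(g_1 g_2) = \Phi(g_1)\Phi(g_2)$.

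Next I would verify bijectivity. For injectivity, if $\Phi(g)$ is trivial then $\widetilde{\Gamma}_{\tilde g}$ is the identity on $\widetilde{X(4)}$, so conjugation by $\varphi$ shows that $\Gamma_g$ is the identity on $C_3^{\{2\}}$; since the action of $PJ_3$ on the Cayley complex $C_3^{\{2\}}$ is free (as is typical for actions on Cayley-type complexes arising in this setting), one concludes $g = e$. For surjectivity, I would run the symmetric argument using the inverse bijection $\varphi^{-1}$, which is equivariant in the reverse direction: given any $\tilde h \in \pi_1(\overline{X(4)})$, the bijection $\varphi \circ \widetilde{\Gamma}_{\tilde h} \circ \varphi^{-1}$ of $C_3^{\{2\}}$ must coincide with some $\Gamma_h$ with $h \in PJ_3$, and then $\Phi(h) = \tilde h$ by the uniqueness established above.

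The step I expect to require the most care is surjectivity, because the theorem only asserts equivariance in one direction (for every $g \in PJ_3$ there exists $\tilde g$). One must check that the analogous assertion holds starting from $\widetilde{\Gamma}$, or equivalently that the action $\Gamma$ of $PJ_3$ on $C_3^{\{2\}}$ realizes all the self-bijections arising from conjugating deck transformations by $\varphi$. This however is essentially built into the construction of $\Gamma$ in the theorem, so no work beyond unwinding definitions should be needed; the content of the corollary really is the conversion of the geometric equivariant bijection into an algebraic isomorphism.
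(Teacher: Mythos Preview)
Your proposal is essentially correct and matches the paper's approach: the paper gives no proof at all, merely stating that the corollary follows immediately from the theorem, so your write-up is an elaboration of what the paper leaves implicit. Two remarks on the points you yourself flag. First, freeness of the $PJ_3$-action on $C_3^{\{2\}}$ does hold, but it is not automatic ``for Cayley-type complexes''; it follows here because on the $0$-skeleton $(\Gamma_0)_g(h)\in\{gh,\,ghs_{13}\}$, and a fixed point would force $g=e$ or $g=hs_{13}h^{-1}$, the latter being impossible since conjugates of $s_{13}$ lie outside $PJ_3$. Second, surjectivity is indeed not literally contained in the theorem's ``for every $g$ there exists $\tilde g$'' formulation; the cleanest fix is to observe that $\varphi$ descends to a homeomorphism $\overline{X(4)}\to C_3^{\{2\}}/PJ_3$, so the image of $\Phi$ has the same index (namely $1$) in $\pi_1(\overline{X(4)})$ as the full deck group, or simply to invoke that both groups are infinite cyclic since $\overline{X(4)}\cong S^1$ and $C_3^{\{2\}}\cong\mathbb{R}$.
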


Here, we recall the definition of the Cayley complex.
Let $G$ be a group and $S$ a generating set of $G$.
The \textit{Cayley graph of $G$ with respect to the generating set $S$} is the graph whose vertex set is $G$, and whose edge set is $\{ {g, gs} \mid g \in G, s \in S \}$.
The cell complex obtained by attaching $2$-cells to each cycle formed by the relations of $G$ is called the \textit{Cayley complex} of $G$ with respect to $S$.

We now introduce the subgroup of $J_3$ used in the theorem above.
In general, for each integer $n \geq 2$ and subset $S \subset [2, n]$, let $J_n^S$ be the subgroup of $J_n$ generated by the elements $s_{p,q}$ for $1 \leq p < q \leq n$ such that $q - p + 1 \in S$, and defined by the following relations:
\begin{itemize}
\item $s_{p,q}^2 = e$ for every $1 \leq p < q \leq n$ satisfying $q - p + 1 \in S$,
\item $s_{p,q}s_{m,r} = s_{m,r}s_{p,q}$ for every $1 \leq p < q \leq n$ and $1 \leq m < r \leq n$ satisfying $[p, q] \cap [m, r] = \emptyset$ and $q - p + 1 \in S$,
\item $s_{p,q}s_{m,r} = s_{p+q-r, p+q-m}s_{p,q}$ for every $1 \leq p < q \leq n$ and $1 \leq m < r \leq n$ satisfying $[m, r] \subset [p, q]$ and $q - p + 1 \in S$.
\end{itemize}
For more details, see \cite[Section 5]{genevois2022cactusgroupsviewpointgeometric}.
Under this setting, the subgroup $J_3^{\{2\}}$ of $J_3$ is defined for $S = \{ 2 \} \subset \{ 2, 3 \}$.
It is the subgroup of $J_3$ generated by $s_{12}$ and $s_{23}$ (excluding $s_{13}$), with the presentation:
\[
\left\langle 
s_{12}, s_{23} 
\left| 
s_{12}^2= s_{23}^2= e
\right.
\right\rangle.
\]
It follows that $J_3^{\{2\}}$ is isomorphic to $\mathbb{Z}_2 * \mathbb{Z}_2$.

For the proof of the theorem above, please refer to \cite{HamaIchiharaPJ3}.
The key is the action of $PJ_3$ on the Cayley complex ${C_3}^{\{2\}}$, which is homeomorphic to the real line $\mathbb{R}$.

Actually, this action on ${C_n}^{[2, n-1]}$ for $PJ_n$ is essentially obtained in \cite{genevois2022cactusgroupsviewpointgeometric}.

\begin{proposition}\label{prop}
Let $\left( {C_n}^{[2, n-1]} \right)^{(0)}$ denote the 0-skeleton of the Cayley complex ${C_n}^{[ 2, n-1 ]}$, which is identified with $J_n^{[2,n-1]}$.
The map
\begin{align*}
\Gamma_0: PJ_n \times \left( {C_n}^{[ 2, n-1 ]} \right)^{(0)} &\longrightarrow \left( {C_n}^{[ 2, n-1 ]} \right)^{(0)}
\end{align*}
defined by
\begin{align*}
(g, h) \mapsto (\Gamma_{0})_{g}(h) :=
\begin{cases}
g h & \text{if\ } g h \in {J_n}^{[ 2, n-1 ]}, \\
g h s_{1,n} & \text{if\ } gh \not\in {J_n}^{[ 2, n-1 ]}
\end{cases}
\end{align*}
induces a group action $\Gamma$ of $PJ_n$ on ${C_n}^{[ 2, n-1 ]}$.
\end{proposition}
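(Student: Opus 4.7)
The plan is to first show that $J_n^{[2,n-1]}$ is a normal subgroup of $J_n$ of index two, with coset decomposition $J_n = J_n^{[2,n-1]} \sqcup J_n^{[2,n-1]}\, s_{1,n}$; this will make the map $\Gamma_0$ well-defined and will let me recast the action uniformly. To this end I would construct a homomorphism $\phi : J_n \to \mathbb{Z}/2\mathbb{Z}$ by setting $\phi(s_{1,n}) = 1$ and $\phi(s_{p,q}) = 0$ for all other generators, then verify the three families of cactus relations. The quadratic and disjoint-commutation relations are immediate. For the nested relation $s_{p,q}s_{m,r} = s_{p+q-r,\,p+q-m}s_{p,q}$ with $[m,r] \subset [p,q]$, one shows that $s_{1,n}$ can appear only when $(p,q) = (1,n)$, and then $(m,r) = (1,n)$ if and only if $(p+q-r, p+q-m) = (1,n)$, so the parities of the $s_{1,n}$-counts on the two sides coincide. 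Since $J_n$ is generated by $J_n^{[2,n-1]}$ together with $s_{1,n}$, the kernel of $\phi$ is exactly $J_n^{[2,n-1]}$.

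Using $\phi$, I would rewrite the formula uniformly as $(\Gamma_0)_g(h) = g h \cdot s_{1,n}^{\phi(gh)}$, which lies in $J_n^{[2,n-1]}$ and matches the piecewise definition in the statement. The identity axiom is trivial because $\phi(h) = 0$. For the composition axiom, setting $h' := (\Gamma_0)_{g_2}(h)$ and using $\phi(h') = 0$, I would compute
\[
(\Gamma_0)_{g_1}(h') = g_1 h' \cdot s_{1,n}^{\phi(g_1)} = g_1 g_2 h \cdot s_{1,n}^{\phi(g_2 h) + \phi(g_1)} = g_1 g_2 h \cdot s_{1,n}^{\phi(g_1 g_2 h)} = (\Gamma_0)_{g_1 g_2}(h),
\]
where the third equality uses that $\phi$ is a homomorphism and $s_{1,n}^2 = e$. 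This gives the group action on the $0$-skeleton.

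Finally, I would promote this to an action on all of $C_n^{[2,n-1]}$ by observing that normality of $J_n^{[2,n-1]}$ makes $\sigma(x) := s_{1,n}\, x\, s_{1,n}$ an automorphism of $J_n^{[2,n-1]}$. The nested cactus relation gives $\sigma(s_{p,q}) = s_{n+1-q,\,n+1-p}$, which is again a generator of $J_n^{[2,n-1]}$ since $q - p + 1 < n$. Hence $\sigma$ permutes the chosen generating set and extends to a cellular automorphism of $C_n^{[2,n-1]}$. Writing $g = g_0\, s_{1,n}^{\phi(g)}$ with $g_0 \in J_n^{[2,n-1]}$, one checks $(\Gamma_0)_g = L_{g_0} \circ \sigma^{\phi(g)}$, a composition of cellular automorphisms of $C_n^{[2,n-1]}$ (where $L_{g_0}$ denotes left translation by $g_0$); consequently $\Gamma_g$ extends to a cellular automorphism of $C_n^{[2,n-1]}$, yielding the desired action $\Gamma$. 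The delicate point I expect to demand the most care is the well-definedness of $\phi$, specifically the nested-relation check when $[p,q]=[1,n]$; once that is in hand, the rest of the argument is a bookkeeping exercise driven entirely by the parity homomorphism $\phi$.
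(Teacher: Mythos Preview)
The paper does not supply its own proof of this proposition; it simply states that the action ``is essentially obtained in \cite{genevois2022cactusgroupsviewpointgeometric}''. Your argument is therefore not competing with a proof in the paper but rather providing one, and it is correct and self-contained. The parity homomorphism $\phi$ is exactly the right organizing device: once you know $J_n = J_n^{[2,n-1]} \sqcup J_n^{[2,n-1]}\,s_{1,n}$, the uniform formula $(\Gamma_0)_g(h)=gh\,s_{1,n}^{\phi(gh)}$ makes the identity and composition axioms transparent, and the decomposition $(\Gamma_0)_g = L_{g_0}\circ\sigma^{\phi(g)}$ legitimately promotes the action to all of $C_n^{[2,n-1]}$ because $\sigma(s_{p,q})=s_{n+1-q,\,n+1-p}$ permutes both the generating set and the defining relations of $J_n^{[2,n-1]}$.

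One expository point to tighten: your sentence ``Since $J_n$ is generated by $J_n^{[2,n-1]}$ together with $s_{1,n}$, the kernel of $\phi$ is exactly $J_n^{[2,n-1]}$'' is not quite a complete argument as written---generation by $H$ and an involution $t$ does not by itself force $\ker\phi = H$. What makes it work here is either (a) normality of $J_n^{[2,n-1]}$ (which you establish later via $\sigma$) together with $s_{1,n}^2=e$, giving $|J_n/J_n^{[2,n-1]}|\le 2$; or (b) the direct observation that the nested relation $s_{1,n}s_{p,q}=s_{n+1-q,\,n+1-p}s_{1,n}$ lets you push every $s_{1,n}$ to the right, so every element of $J_n$ has the form $w\,s_{1,n}^\varepsilon$ with $w\in J_n^{[2,n-1]}$. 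Either fix is immediate, so this is a matter of ordering rather than a genuine gap.
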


\section{A presentation of $PJ_4$}

Hereafter, we focus on the cactus group $J_4$ of degree four.
Recall that $J_4$ has the following presentation:
$$
\left\langle 
\begin{array}{l}
s_{12}, s_{23}, s_{34}, \\
s_{13}, s_{24}, s_{14} 
\end{array}
\left| 
\begin{array}{l}
s_{12}^2= s_{23}^2= s_{34}^2= s_{13}^2= s_{24}^2= s_{14}^2 = e,\\ 
s_{12} s_{34} = s_{34} s_{12}, \ 
s_{12} s_{13} = s_{13} s_{23}, \ 
s_{23} s_{24} = s_{24} s_{34}, \\
s_{12} s_{14} = s_{14} s_{34}, \ 
s_{23} s_{14} = s_{14} s_{23}, \ 
s_{13} s_{14} = s_{14} s_{24}
\end{array}
\right.
\right\rangle.
$$

As noted in the previous section, both $\overline{X(5)}$ and $\overline{M_{0,5}(\mathbb{R})}$ are homeomorphic to the closed non-orientable surface of Euler characteristic $-3$, which is the connected sum of five projective planes.
This implies that $PJ_4$ admits the following presentation:
\begin{equation}\label{surfpres}
\begin{array}{c}
\langle
\alpha_1 , \alpha_2 , \alpha_3 , \alpha_4 , \alpha_5 \mid
\alpha_1^2 \alpha_2^2 \alpha_3^2 \alpha_4^2 \alpha_5^2 = e
\rangle
\end{array}
\end{equation}

On the other hand, another explicit presentation of $PJ_4$ was obtained purely algebraically in \cite[Appendix A]{BCL24}, using the Reidemeister--Schreier method.
In fact, as \cite[Theorem~5.5]{BCL24} states, it was shown that $PJ_4$ has the following presentation:
\begin{equation}\label{BCLpres}
\langle \alpha, \beta, \gamma, \delta, \epsilon \mid \alpha \gamma \epsilon \beta \epsilon \alpha^{-1} \delta^{-1} \beta \gamma \delta^{-1} = e \rangle
\end{equation}

This presentation is a simple one-relator presentation with five generators, but it does not appear to be directly related to the presentation described above in the presentation~\eqref{surfpres}.
Indeed, as noted in \cite[Remark 5.6]{BCL24}, it seems difficult to express the generators $\alpha_k$ in terms of the standard generators $s_{i,j}$ of the full cactus group $J_4$.

In \cite{hama2025presentationpurecactusgroup}, we obtain another presentation of $PJ_4$.

\begin{theorem}\label{thm:PJ4pres}
The pure cactus group $PJ_4$ admits the following presentation:
\begin{align*}
\left\langle 
g_1, \dots, g_{10} 
\left|
\begin{array}{l}
g_1g_{10}^{-1}g_2^{-1} = 
g_9g_5^{-1}g_4 = 
g_5g_1g_6^{-1} \\
= g_8g_{10}g_7^{-1} = 
g_8g_3^{-1}g_4 = 
g_2g_9g_7^{-1}g_6g_3^{-1} = e
\end{array}
\right.
\right\rangle
\end{align*}
This presentation can be transformed into the following:
\begin{align*}
\langle g_2, g_4, g_8, g_9, g_{10} \mid
g_2g_9g_{10}^{-1}g_8^{-1}g_4g_9g_2g_{10}g_8^{-1}g_4^{-1}=e
\rangle
\end{align*}
\end{theorem}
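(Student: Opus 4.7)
The plan is to adapt the strategy of Theorem~1 to the case $n=4$. By Proposition~\ref{prop}, $PJ_4$ acts on the Cayley complex $C_4^{[2,3]}$ of $J_4^{[2,3]}$. I would first establish the analogue of Theorem~1 in this setting, namely that $C_4^{[2,3]}$ is $PJ_4$-equivariantly homeomorphic to the universal cover $\widetilde{X(5)}$ of $\overline{X(5)}$; equivalently, the action $\Gamma$ is free and properly discontinuous, and its quotient is homotopy equivalent to $\overline{X(5)}$, which is a closed non-orientable surface with Euler characteristic $-3$.

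The heart of the proof is then to select a compact fundamental domain $F \subset C_4^{[2,3]}$ for $\Gamma$ (a Dirichlet polygon, in the terminology of the keywords) and to read off a presentation of $PJ_4 \cong \pi_1\bigl(C_4^{[2,3]}/\Gamma\bigr)$ from the side and face identifications on $\partial F$. The $1$-cells of $C_4^{[2,3]}$ are labeled by $\{s_{12}, s_{23}, s_{34}, s_{13}, s_{24}\}$ and its $2$-cells correspond to the defining relations of $J_4^{[2,3]}$ (squares, commutations, and nesting), which provides explicit combinatorial data. Because $\chi(\overline{X(5)}) = -3$, if $F$ is chosen so that the quotient has exactly one vertex and six $2$-cells, then it must have ten edges; labeling these edges by $g_1,\ldots,g_{10}$ and tracing the boundary of each $2$-cell as a word in the $g_i$ produces the six relators appearing in the first presentation.

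The second presentation is then obtained from the first by routine Tietze transformations. Five of the six relators have length three and each expresses one generator in terms of two others:
\[
g_1 = g_2 g_{10}, \quad g_5 = g_4 g_9, \quad g_6 = g_4 g_9 g_2 g_{10}, \quad g_7 = g_8 g_{10}, \quad g_3 = g_4 g_8.
\]
Eliminating $g_1, g_3, g_5, g_6, g_7$ and substituting these expressions into the remaining length-five relator $g_2 g_9 g_7^{-1} g_6 g_3^{-1} = e$ yields, after free reduction,
\[
g_2 g_9 g_{10}^{-1} g_8^{-1} g_4 g_9 g_2 g_{10} g_8^{-1} g_4^{-1} = e,
\]
which is the stated length-ten relator. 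Its length matches that of the standard fundamental polygon of a non-orientable surface of genus five, as expected from \eqref{surfpres}.

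The principal difficulty lies in the middle step: choosing a fundamental domain whose side and face identifications produce exactly ten generators and six relators of the stated form. Because $J_4^{[2,3]}$ has five generators together with several commutation and nesting relations, the local structure of $C_4^{[2,3]}$ near each vertex is intricate, and tracking the induced $PJ_4$-action on the two cosets of $J_4^{[2,3]}$ inside $J_4$ requires careful bookkeeping. Once $F$ is correctly identified and its boundary labeling fixed, extracting the first presentation is direct and the algebraic reduction to the length-ten relator is straightforward.
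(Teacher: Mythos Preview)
Your overall strategy---let $PJ_4$ act on $C_4^{[2,3]}$, take a Dirichlet polygon, read off a presentation, then simplify by Tietze moves---matches the paper, and your elimination of $g_1,g_3,g_5,g_6,g_7$ to obtain the length-ten relator is carried out correctly.

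Two points where your outline diverges from what the paper actually does. First, the preliminary step of identifying $C_4^{[2,3]}$ with $\widetilde{X(5)}$ is not used. The paper observes directly that $C_4^{[2,3]}$ is (up to scaling) the hyperbolic plane, realized by the $\{4,5\}$-tessellation, and works entirely inside $\mathbb{H}^2$; the comparison with $\overline{X(5)}$ is a separate theorem proved afterwards. Second, and more substantively, your cell count ``one vertex, ten edges, six $2$-cells'' describes the \emph{presentation complex}, not the quotient of a Dirichlet domain, and this is not how the relators are found. In the paper the Dirichlet polygon $D$ centered at $e$ is a $20$-gon (some of whose sides are diagonals of squares in $C_4^{[2,3]}$); its sides are paired by ten elements $g_1,\dots,g_{10}$, which are located by a brute-force enumeration of all $g\in PJ_4$ with $d(e,g\cdot e)\le 4$. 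The resulting quotient carries the cell structure $6$ vertices, $10$ edges, $1$ face, and the six relators are the \emph{vertex-cycle words} supplied by the Poincar\'e Polygon Theorem after one checks that the angle sums make the identified polygon a complete hyperbolic surface. So the mechanism is ``side-pairings $\Rightarrow$ generators, vertex cycles $\Rightarrow$ relators'', not ``trace the boundaries of six $2$-cells''. This is exactly the step that produces the explicit $s_{ij}$-words for the $g_i$ and the specific shape of the relators, and it cannot be replaced by an Euler-characteristic count alone.
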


We remark that the generators $g_i$ given above are explicitly described in terms of the standard generating set $\{s_{ij}\}$ of the cactus group $J_4$ and are of minimal word length with respect to $\{s_{ij}\}$. 
That is, each $g_i$ has word length 4 or 5, and there are no elements in $PJ_4$ whose word length is less than or equal to 3.

We can confirm that the presentation in Theorem~\ref{thm:PJ4pres} is equivalent to the presentation~\eqref{surfpres},
as well as to the presentation~\eqref{BCLpres} given in \cite{BCL24}.
See \cite{hama2025presentationpurecactusgroup} for details.

The key to our proof of Theorem~\ref{thm:PJ4pres} is to consider the action of $PJ_4$ on the Cayley complex ${C_4}^{[2, 3]}$ of the subgroup $J_4^{[2,3]}$, as given in Proposition~\ref{prop}.
In fact, the subgroup $J_4^{[2,3]}$ is generated by $s_{12}, s_{23}, s_{34}, s_{13}, s_{24}$ (excluding $s_{14}$), and has the following presentation:
\[
\left\langle 
s_{12}, s_{23}, s_{34}, s_{13}, s_{24} 
\left| 
\begin{array}{l}
s_{12}^2= s_{23}^2= s_{34}^2= s_{13}^2= s_{24}^2=e,\\ 
s_{12} s_{34} = s_{34} s_{12}, \ 
s_{12} s_{13} = s_{13} s_{23}, \ 
s_{23} s_{24} = s_{24} s_{34}
\end{array}
\right.
\right\rangle.
\]
It follows that the Cayley complex ${C_4}^{[2, 3]}$ is isometric to the hyperbolic plane $\mathbb{H}^2$ up to scaling.
See Figure~\ref{word_length_2} for a local picture of the complex, and compare it with Figure~\ref{fig:45tesse}.

\begin{figure}[htb]
\centering
\begin{minipage}{.47\textwidth}
    \centering
    \vspace{-1.3cm}
\begin{overpic}[width=\textwidth]{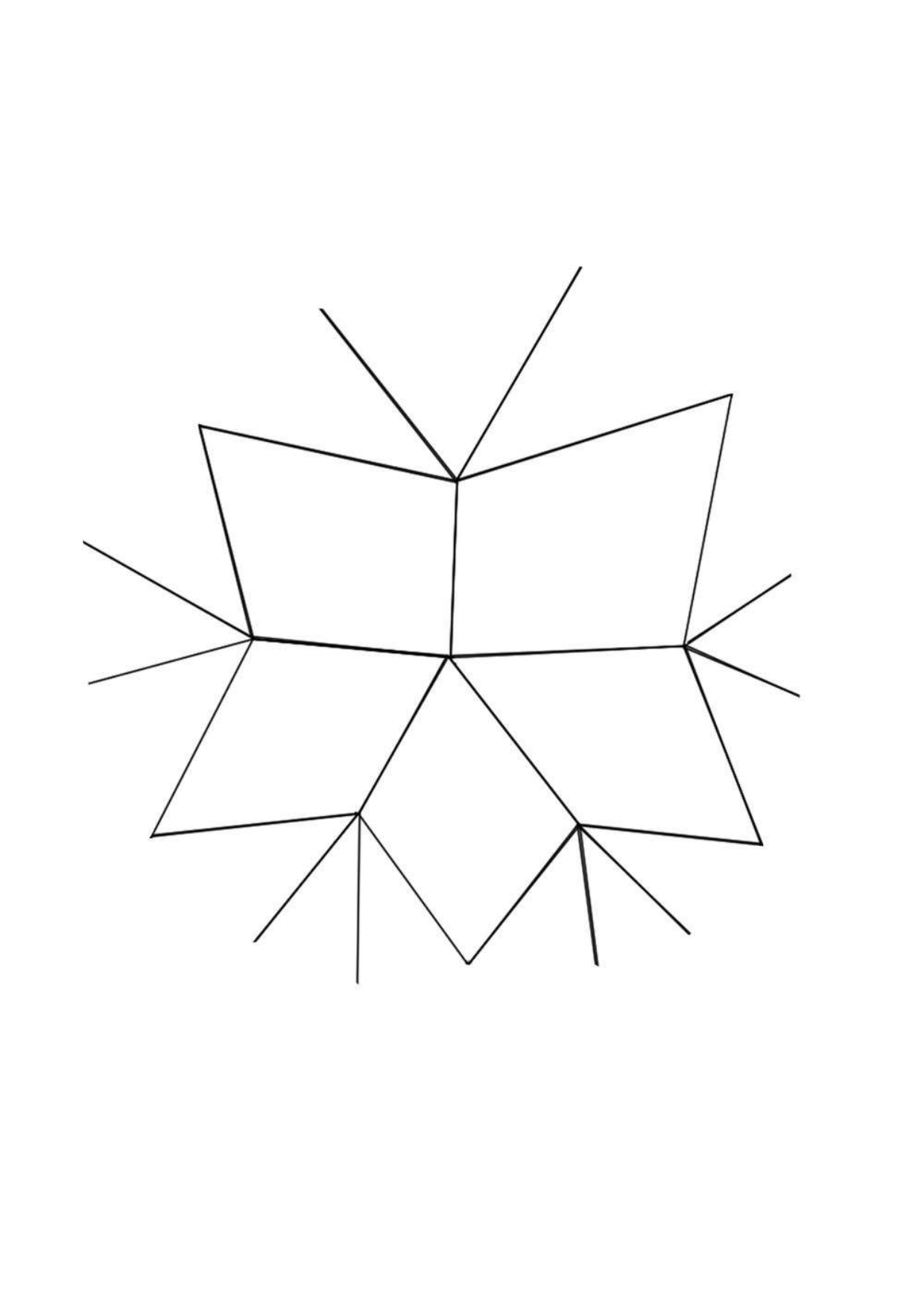}
\put(33,51){$e$} 

    \put(49,52){$s_{12}$}
    \put(31,61){$s_{13}$}
    \put(20,53){$s_{23}$}
    \put(24,39){$s_{24}$} 
    \put(44,38){$s_{34}$}

        \put(61,57){$s_{12}s_{23}$}
        \put(55,71){$s_{13}s_{23}$}
        \put(45,81){$s_{13}s_{24}$}
        \put(25,78){$s_{13}s_{34}$}
        \put(11,69){$s_{13}s_{12}$}
        \put(2,60){$s_{23}s_{12}$}
        \put(1,49){$s_{23}s_{34}$}
        \put(8,34){$s_{24}s_{34}$}
        \put(15,26){$s_{24}s_{12}$}
        \put(24,22){$s_{24}s_{13}$}
        \put(35,24){$s_{24}s_{23}$}
        \put(45,24){$s_{34}s_{23}$}
        \put(53,27){$s_{34}s_{13}$}
        \put(59,36){$s_{34}s_{12}$}
        \put(62,47){$s_{12}s_{24}$}       
\end{overpic}
\vspace{-2cm}
    \caption{Neighborhood of $e$ in the Cayley complex ${C_4}^{[ 2, 3 ]}$}
    \label{word_length_2}
\end{minipage}
\qquad 
\begin{minipage}{.45\textwidth}
\centering
\includegraphics[width=.9\textwidth]{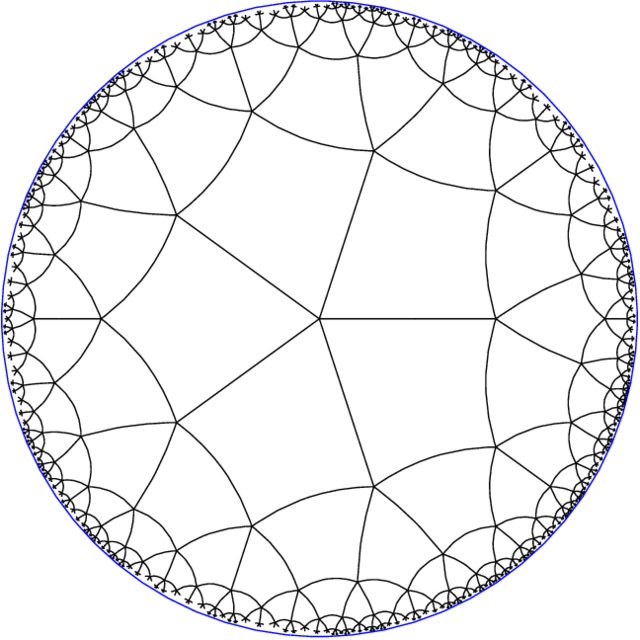}
\caption{The $\{4,5\}$-tesselation of the hyperbolic plane. (\cite[Figure 7]{BandeltChepoiEppstein})}
\label{fig:45tesse}
\end{minipage}
\end{figure}

Here we give an outline of the proof of Theorem~\ref{thm:PJ4pres}.

First, through brute-force enumeration, we listed all the elements of $PJ_4$ whose translation lengths are at most $4$ with respect to the action on ${J_4}^{[2, 3]}$.
As a result, we obtained the ten elements $g_1, \dots, g_{10}$ and their inverses in $PJ_4$ below. 
\[
\begin{array}{ll}
    g_{1} =  s_{13} s_{24} s_{12} s_{34} s_{14} \ , \qquad   &   
    g_{2} = s_{13}s_{24}s_{13}s_{24} \ ,          \\ 
    g_{3} = s_{13}s_{34}s_{23}s_{12} s_{14} \ ,    &
    g_{4} = s_{13}s_{34}s_{13}s_{23} s_{14}  \ ,   \\
    g_{5} = s_{23}s_{12}s_{23}s_{13} \ ,       &
    g_{6} = s_{23}s_{12}s_{24}s_{12}s_{14} \ ,   \\
    g_{7} = s_{23}s_{34}s_{13}s_{34} s_{14} \ ,   &
    g_{8} = s_{24}s_{34}s_{23}s_{34} \ ,  \\
    g_{9} = s_{24}s_{12}s_{23}s_{34}s_{14} \ ,  &
    g_{10} = s_{24}s_{23}s_{13}s_{34}s_{14}   
\end{array}
\]
In fact, if $d(e, g \cdot e) \le 4$ holds for some $g \in PJ_4$, then $g = g_i^{\pm1}$ for some $1 \leq i \leq 10$, and in fact, $d(e, g_i \cdot e) = 4$.

Using these elements, we can construct the Dirichlet polygon $D$ in $\mathbb{H}^2 \cong {C_4}^{[2, 3]}$ centered at $e$ for the action of $PJ_4$.
This polygon is illustrated in Figure~\ref{derichlet}.

{\small
\begin{figure}[htb]
    \centering
\begin{overpic}[width=\textwidth]{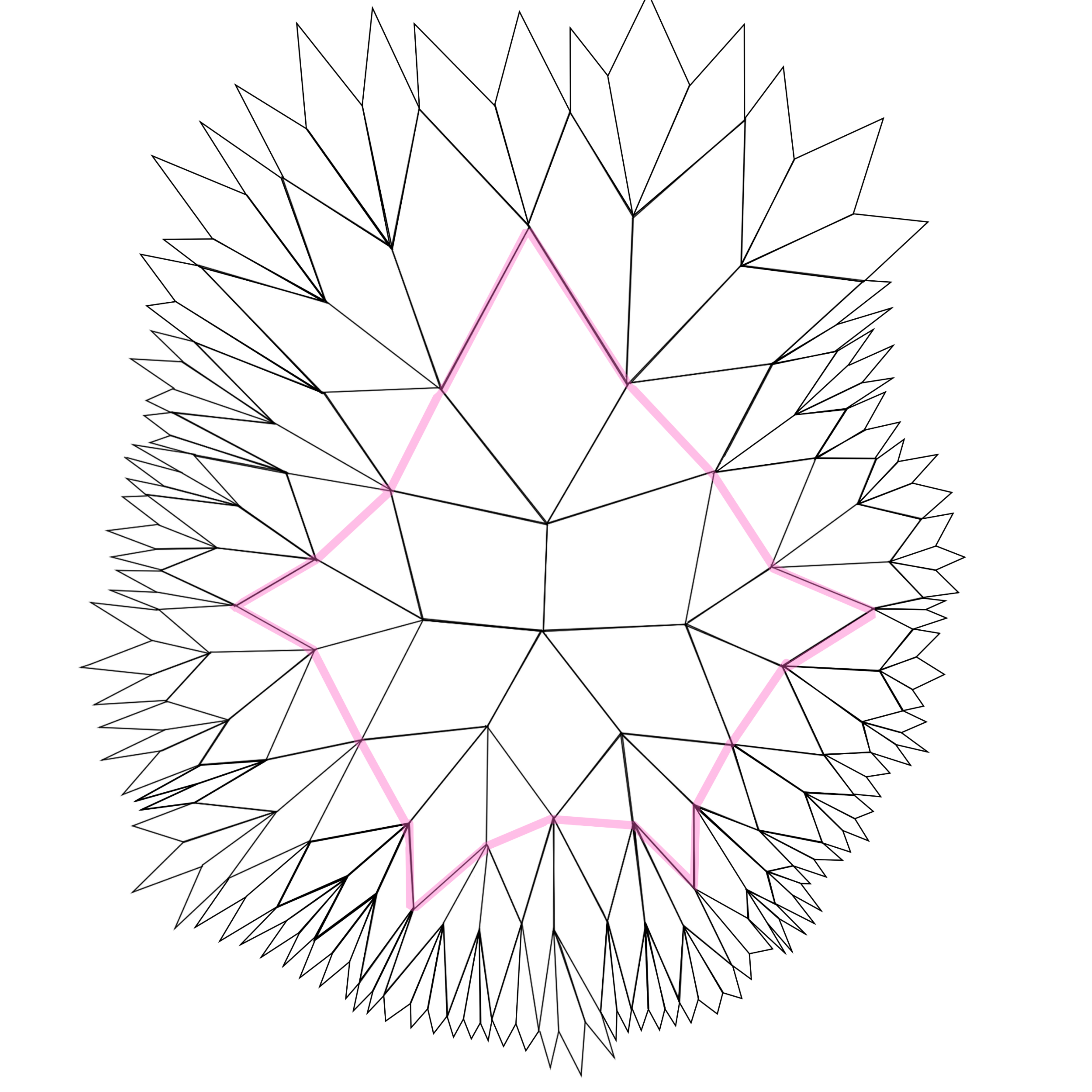}

\put(50,41){$e$} 

    \put(61,42){$s_{12}$}
    \put(50,50){$s_{13}$}
    \put(38,42){$s_{23}$}
    \put(45,32.5){$s_{24}$} 
    \put(57,32){$s_{34}$}

        \put(68,47){$s_{12}s_{23}$}
        \put(63.5,57){$s_{13}s_{23}$}
        \put(55.5,65){$s_{13}s_{24}$}
        \put(38,65){$s_{13}s_{34}$}
        \put(33,55.2){$s_{13}s_{12}$}
        \put(27,49){$s_{23}s_{12}$}
        \put(25,40.5){$s_{23}s_{34}$}
        \put(31.5,33){$s_{24}s_{34}$}
        \put(35,25){$s_{24}s_{12}$}
        \put(42,24){$s_{24}s_{13}$}
        \put(48.3,26){$s_{24}s_{23}$}
        \put(56,25){$s_{34}s_{23}$}
        \put(63,26){$s_{34}s_{13}$}
        \put(65,32){$s_{34}s_{12}$}
        \put(70,39){$s_{12}s_{24}$}
        
            \put(45,79){\small{$s_{13}s_{24}s_{23}$}}
            \put(20,45){\small{$s_{23}s_{34}s_{12}$}}
            \put(35,17){\small{$s_{24}s_{13}s_{23}$}}
            \put(62,19){\small{$s_{34}s_{13}s_{12}$}}
            \put(76,45){\small{$s_{12}s_{24}s_{34}$}}
\end{overpic}
    \caption{}
    \label{derichlet}
\end{figure}
}

The polygon $D$ has 20 sides.
Note that ten of these sides are not edges of the Cayley complex ${C_4}^{[2, 3]}$; rather, they are diagonals of certain 2-cells (quadrangles) in the complex.
Also, note that the interior angles around the vertices of $D$ are $\frac{2\pi}{5}$, $\frac{3\pi}{5}$, or $\frac{4\pi}{5}$.

The elements $g_1, \dots, g_{10}$ and their inverses define the side identifications of $D$ in $\mathbb{H}^2 \cong {C_4}^{[2, 3]}$.
Moreover, we can confirm that a complete hyperbolic surface is constructed after the identifications.
Then, by virtue of the famous Poincar\'{e} Polygon Theorem (cf. \cite{Poincare,Maskit}), it follows that $g_1, \dots, g_{10}$ generate $PJ_4$, and the compositions of generators
 $g_{3} g_6^{-1} g_{7} g_{9}^{-1} g_{2}^{-1}$, 
 $g_{3} g_{8}^{-1} g_{4}^{-1}$, 
 $g_{5} g_{9}^{-1} g_{4}^{-1}$, 
 $g_{5} g_{1} g_{6}^{-1}$, 
 $g_8 g_{10} g_7^{-1}$, 
 $g_{10} g_{1}^{-1} g_{2}$
form a complete set of relations.
Thus, we obtain the following presentation of $PJ_4$ as desired:
See \cite{hama2025presentationpurecactusgroup} for details.

Finally, we give an explanation of how to find the correspondence between the following two presentations:
\[
\begin{array}{ll}
\langle \alpha_1 , \alpha_2 , \alpha_3 , \alpha_4 , \alpha_5 \mid 
\alpha_1^2\alpha_2^2\alpha_3^2\alpha_4^2\alpha_5^2 =e \rangle , \\[12pt]
\left\langle 
g_1, \dots, g_{10} \left| 
\begin{array}{l}
g_{3} g_6^{-1} g_{7} g_{9}^{-1} g_{2}^{-1} = 
g_{3} g_{8}^{-1} g_{4}^{-1} = 
g_{5} g_{9}^{-1} g_{4}^{-1} \\
= g_{5} g_{1} g_{6}^{-1} = 
g_8 g_{10} g_7^{-1} = 
g_{10} g_{1}^{-1} g_{2} = e
\end{array}
\right.
\right\rangle 
\end{array}
\]
Viewing the fundamental polygon $D$, we see that the painted regions in Figure~\ref{Colored Mobius band} give five M\"{o}bius bands embedded in ${C_4}^{[ 2, 3 ]}/PJ_4$. 
After removing them, we can rebuild the remaining parts to obtain a sphere with five holes. 
See Figure~\ref{C_4^23/PJ_4}. 
Then we can confirm that ${C_4}^{[ 2, 3 ]} / PJ_4$ is homeomorphic to the connected sum $\sharp_5 \mathbb{RP}^2$ of five real projective planes. 
Setting the closed curve $\alpha_2$ in $\sharp_5 \mathbb{RP}^2$ shown in Figure~\ref{C_4^23/PJ_4}, we obtain a correspondence $\alpha_2 \longmapsto  g_8^{-1} g_4 g_9 g_2 g_{10} \in PJ_4$ by considering a lift of $\alpha_2$ in ${C_4}^{[ 2, 3 ]}$. 
In the same way, we can obtain the corresponding elements in $PJ_4$ for $\alpha_1$, $\alpha_3$, $\alpha_4$, $\alpha_5$. 

\begin{figure}[htb]
    \centering
    \begin{minipage}{.5\textwidth}
\begin{overpic}[width=\textwidth]{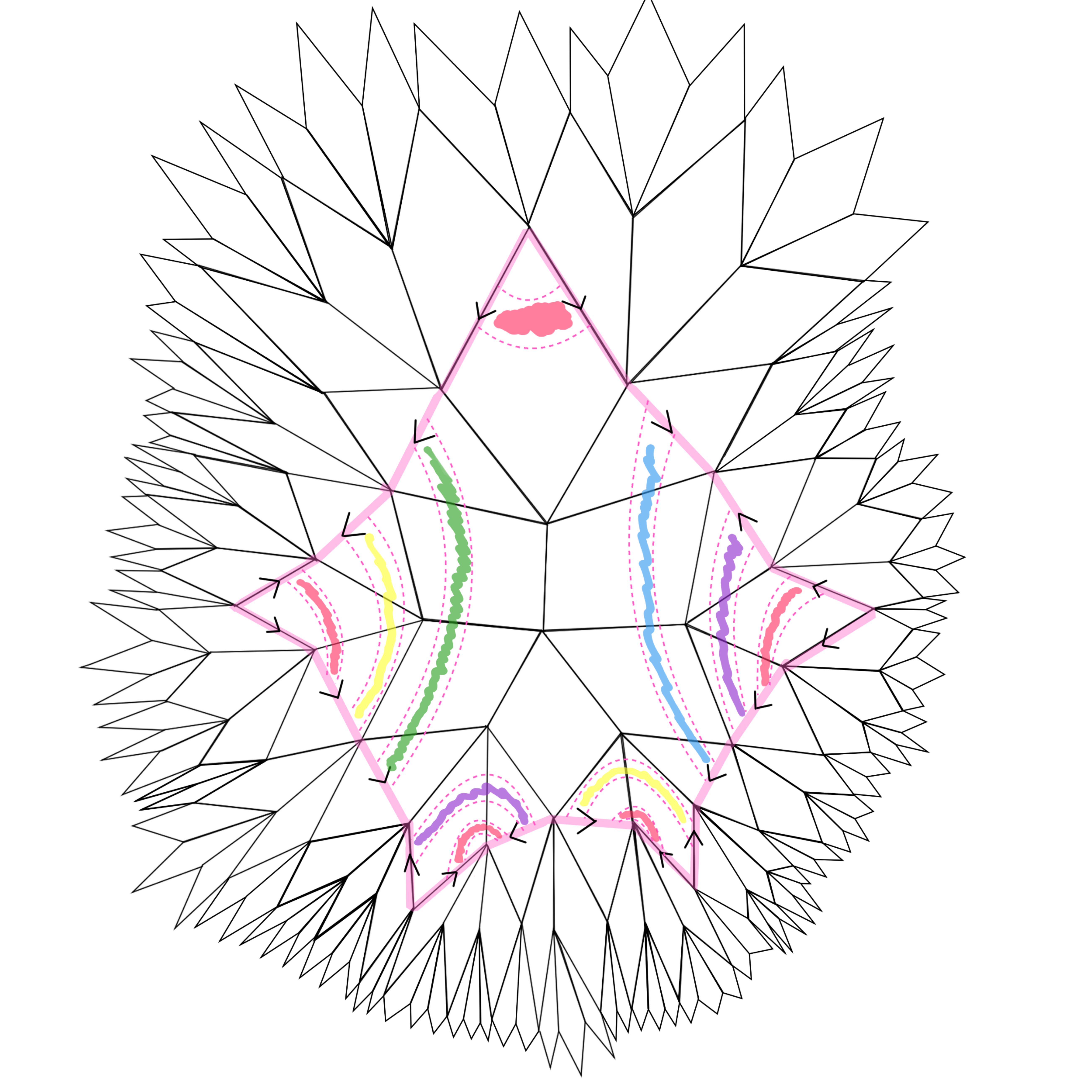}
\end{overpic}       
\caption{Five M\"{o}bius bands}
\label{Colored Mobius band}
    \end{minipage}
    \begin{minipage}{.48\textwidth}
\begin{overpic}[width=\textwidth]{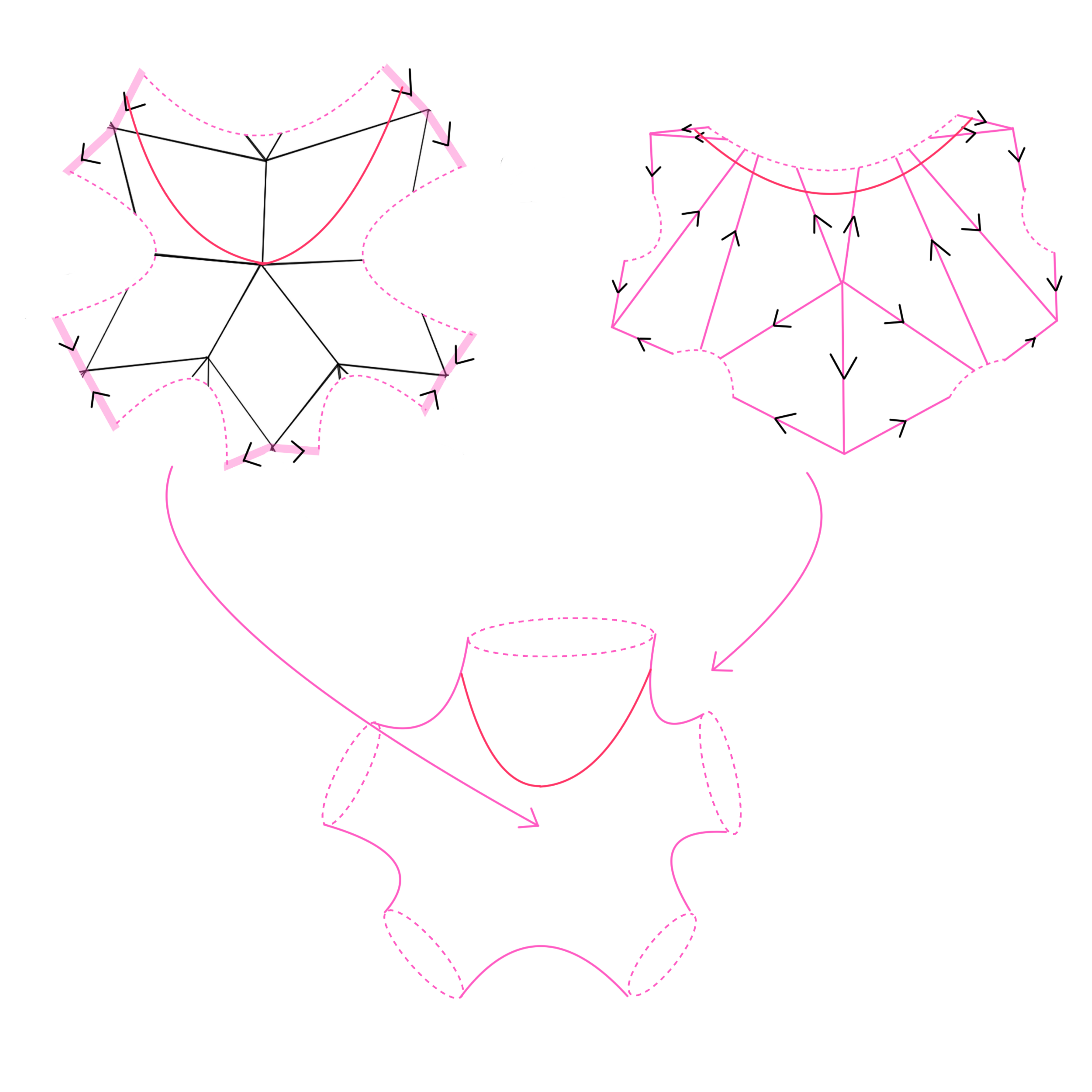}
\put(50,30){\textcolor{red}{$\alpha_2$}}
\end{overpic}
\caption{Finding the generator $\alpha_2$}
\label{C_4^23/PJ_4}
    \end{minipage}
\end{figure}

\section{$PJ_4$ and $\overline{X(5)}$}

As a consequence of the previous section, it can be proved that $PJ_4$ is isomorphic to the fundamental group of the connected sum of five projective planes, which is in turn isomorphic to $\pi_1(\overline{X(5)})$.
In this section, we provide an alternative proof by constructing a homeomorphism directly between $C_4^{[2,3]}/PJ_4$ and $\overline{X(5)}$. 

\begin{theorem}\label{PJ4 and X5}
Let $\overline{X(5)}$ denote the compactification of the configuration space $X(5)$ of five points on the circle.
Let ${C_4}^{[2, 3]}$ be the Cayley complex of the subgroup $J_4^{[2, 3]}$ of the cactus group $J_4$.
Then $\overline{X(5)}$ is homeomorphic to the quotient space of ${C_4}^{[2, 3]}$ under the action of the pure cactus group $PJ_4$ of degree four.
\end{theorem}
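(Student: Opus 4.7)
The strategy is to construct the homeomorphism cell by cell, so that the argument runs parallel to Yoshida's combinatorial construction of $\overline{X(5)}$ rather than merely appealing to the classification of surfaces (which would trivialize the statement given the two $\sharp_5\mathbb{RP}^2$ descriptions already recorded in Section 2). First I would recall from \cite{MYos96} Yoshida's description of $\overline{X(5)}$ as a CW complex: the open piece $X(5)$ breaks into open $2$-cells indexed by the twelve dihedral classes of cyclic orderings of five labeled points on $S^1$, and the added boundary strata are indexed by nested bracketings that record which adjacent clumps of points may collide. This gives $\overline{X(5)}$ an explicit combinatorial cell structure.

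On the Cayley-complex side I would use the Dirichlet polygon $D \subset \mathbb{H}^2 \cong C_4^{[2,3]}$ and its side-pairings $g_1^{\pm 1},\dots,g_{10}^{\pm 1}$ constructed in the outline of Theorem~\ref{thm:PJ4pres}. Tracing the vertex cycles of $D$ under these identifications gives an explicit cell decomposition of $C_4^{[2,3]}/PJ_4$; its open $2$-cells correspond to $PJ_4$-orbits of chambers in the $\{4,5\}$-tessellation, and these, via the projection $J_4 \to S_4$ together with the choice of fixing one of the five marked points at $\infty$, are in natural bijection with the twelve cyclic orderings indexing the $2$-cells of $\overline{X(5)}$.

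The next step is to promote this bijection of top cells to a cellular, $PJ_4$-equivariant map $\Phi : C_4^{[2,3]} \to \overline{X(5)}$. Each open chamber of $C_4^{[2,3]}$ is sent homeomorphically onto the $2$-cell of $\overline{X(5)}$ labeled by the corresponding cyclic ordering, and $\Phi$ is extended across edges so that crossing an edge labeled by the generator $s_{p,q} \in J_4^{[2,3]}$ corresponds to approaching the Yoshida stratum where the points indexed by $[p,q]$ collide. The three families of cactus relations (the involutions, the commutations when $[p,q]\cap[m,r]=\emptyset$, and the conjugations when $[m,r]\subset[p,q]$) are exactly the local compatibilities needed for the extension across the $0$-skeleton, because they reflect the standard ``disjoint'' and ``nested'' compositions of bracketings in Yoshida's strata. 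Descending to the quotient and checking that $\Phi/PJ_4$ is a continuous bijection between compact Hausdorff spaces will then give the homeomorphism.

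The main obstacle is the verification that the side-pairings $g_1,\dots,g_{10}$ realize precisely the identifications dictated by Yoshida's compactification: each vertex cycle of $D$ must correspond to a single boundary stratum of $\overline{X(5)}$ under $\Phi$, with no extra or missing gluings. Because the interior angles of $D$ at its vertices are $2\pi/5$, $3\pi/5$, or $4\pi/5$, the vertex cycles close up into orbits whose sizes must be matched with the local $5$-fold combinatorial symmetry of Yoshida's pentagonal collision strata, and this matching has to be carried out uniformly across all twelve cyclic orderings. Most of the content of the proof is in this bookkeeping, and I expect it to rely on a careful parallel reading of Figure~\ref{derichlet} alongside Yoshida's boundary tables.
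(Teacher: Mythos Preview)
Your overall plan---build a cellular bijection between the Yoshida complex and $F:=C_4^{[2,3]}/PJ_4$ using the side-pairings $g_1,\dots,g_{10}$---is the same plan the paper follows, but the specific matching you propose cannot work as stated. You assert that the $PJ_4$-orbits of \emph{chambers} of the $\{4,5\}$-tessellation are in bijection with the twelve dihedral cyclic orderings of five points. They are not: $F$ inherits from $C_4^{[2,3]}$ a decomposition into \emph{fifteen} quadrangular $2$-cells, thirty $1$-cells and twelve $0$-cells (consistent with $\chi=-3$), whereas Yoshida's $\overline{X(5)}$ has twelve pentagonal $2$-cells, thirty $1$-cells and fifteen $0$-cells. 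The projection $J_4\to S_4$ that you invoke labels \emph{vertices} of $C_4^{[2,3]}$ (group elements), not chambers; so what it produces is a natural bijection between the twelve vertex classes of $F$ and the twelve cyclic orderings, i.e.\ the \emph{dual} correspondence to the one you describe. Consequently your proposed extension ``crossing an $s_{p,q}$-edge $\leftrightarrow$ approaching the collision stratum of the points in $[p,q]$'' is off by a duality: an edge of $F$ joins two vertex classes and should correspond to an adjacency of two pentagons in $\overline{X(5)}$ (a swap of two cyclically adjacent marked points), not to a wall of a single pentagon.

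The paper fixes exactly this mismatch by passing to the \emph{dual} cell structure on $\overline{X(5)}$, so that the cyclic orderings label vertices rather than $2$-cells. It then writes down an explicit map $\varphi$ from those twelve vertices to twelve vertex classes $[e],[s_{12}],\dots,[s_{13}s_{24}s_{23}]$ in $F$, and checks, one by one, that each of the fifteen quadrilateral $2$-cells of the dual $\overline{X(5)}$ is carried to a $2$-cell of $F$; the elements $g_i$ enter only to witness equalities like $[s_{13}s_{12}]=[s_{12}s_{23}]$ in $F$. Your proposal becomes correct once you dualize one of the two complexes (equivalently, match Yoshida's pentagons with the $5$-valent vertices of $F$ rather than with its square faces), after which the ``bookkeeping'' you anticipate is precisely the fifteen-case verification the paper carries out.
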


\begin{corollary}
The pure cactus group $PJ_4$ of degree four is isomorphic to the fundamental group $\pi_1(\overline{X(5)})$, where $\overline{X(5)}$ is the compactification of the configuration space of five points on the circle. \qed
\end{corollary}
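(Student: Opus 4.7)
The plan is to exhibit a cellular homeomorphism by matching two compatible stratifications. On one side, Yoshida's compactification $\overline{X(5)}$ (see \cite{MYos96,KNY99,MN00}) carries a canonical CW structure whose open top cells are indexed by the $12$ cyclic orderings of the labels $\{1,\dots,5\}$ on $S^1$ modulo orientation-preserving and -reversing rotations, whose $1$-cells correspond to codimension-one degenerations in which two cyclically adjacent points collide, and whose $0$-cells correspond to maximally degenerate nodal configurations. Two top cells share a $1$-cell exactly when the corresponding cyclic orderings differ by reversing a contiguous sub-arc, which is the combinatorial operation implemented by a generator $s_{p,q} \in J_4$.

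Second, I would read off the CW structure of $C_4^{[2,3]}/PJ_4$ from the Cayley complex and the Dirichlet polygon $D$ depicted in Figure~\ref{derichlet}. Vertices of $C_4^{[2,3]}$ are in bijection with $J_4^{[2,3]}$ and, via $g \mapsto g \cdot (1,2,3,4,5)$, with the orbit of the standard ordering under the natural action of $J_4^{[2,3]}$ by segment reversals on $S^1$; edges record application of a single generator $s_{p,q}$; and $2$-cells encode the commutation relations. Using Proposition~\ref{prop}, the $\Gamma$-action of $PJ_4$ descends to the Cayley complex, and the $20$ side identifications of $D$ by $g_1^{\pm1},\dots,g_{10}^{\pm1}$ provide explicit gluing data for the quotient cells.

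Third, I would define the comparison map $\Phi \colon C_4^{[2,3]} \to \overline{X(5)}$ by sending a vertex $g$ to the open top-cell associated with the cyclic ordering $g \cdot (1,2,3,4,5)$ (say, to its barycentre in Yoshida's parametrisation), and extending affinely across edges and $2$-cells using arc-length coordinates on the strata. Since $PJ_4 \subset \ker(\pi \colon J_4 \to S_4)$, the $\Gamma$-action shuffles labels within the same cyclic ordering and hence within the same Yoshida cell, so $\Phi$ is $\Gamma$-invariant and descends to $\overline{\Phi} \colon C_4^{[2,3]}/PJ_4 \to \overline{X(5)}$. A cell-by-cell count then shows $\overline{\Phi}$ is a bijection of finite CW complexes with matching cell types, whereupon continuity of $\Phi$ and compactness of the quotient upgrade $\overline{\Phi}$ to the desired homeomorphism.

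The main obstacle is verifying that the attaching data agree at the $0$-strata of $\overline{X(5)}$, where several top cells meet according to the combinatorics of nodal configurations. Concretely, one must check that at each vertex orbit of $D$ the cyclic arrangement of incident edges, as prescribed by the interior angles $\frac{2\pi}{5}, \frac{3\pi}{5}, \frac{4\pi}{5}$, coincides with the combinatorial link of the corresponding nodal stratum in $\overline{X(5)}$. Carrying this out requires a careful, case-by-case inspection of all $20$ side identifications of $D$ against Yoshida's gluing rules, and is where the bulk of the genuine work lies.
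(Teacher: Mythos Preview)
Your plan is essentially the paper's own: construct a cellular homeomorphism between $C_4^{[2,3]}/PJ_4$ and $\overline{X(5)}$, then read off the isomorphism of fundamental groups. The paper passes to the \emph{dual} CW structure on $\overline{X(5)}$ and simply tabulates an explicit vertex correspondence $\varphi$; your framing $g \mapsto [\pi(g)\cdot(1,2,3,4)]$ supplies the conceptual reason that table exists and why it is $\Gamma$-invariant (since $\Gamma_h(g)\in\{hg,\,hgs_{14}\}$ and $[abcd]=[dcba]$). The ``main obstacle'' you flag --- checking that the attaching data agree --- is exactly the content of the paper's proof: a list of all fifteen quadrangular $2$-cells of the dual of $\overline{X(5)}$, each verified by hand to map to a $2$-cell of $F$.

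Two small corrections. First, adjacency of top cells in $\overline{X(5)}$ is by \emph{swapping two cyclically adjacent points}, not by reversing an arbitrary contiguous sub-arc; the edges of $C_4^{[2,3]}$ labelled $s_{13}$ and $s_{24}$ still match because, after the identification $[a_1a_2a_3a_4]=[a_4a_3a_2a_1]$, a length-$3$ reversal fixing position $4$ (resp.\ $1$) amounts to swapping the anchored point $0$ with its neighbour $a_4$ (resp.\ $a_1$). Second, the Dirichlet polygon $D$ and its interior angles play no role in this argument --- the paper works directly with the square tiling of the quotient inherited from $C_4^{[2,3]}$, not with $D$; the angle data belong to the Poincar\'e-polygon proof of Theorem~\ref{thm:PJ4pres}, not here.
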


We give a brief review of the cell complex structure of $\overline{X(5)}$ studied in \cite{AperyYoshida,MYos96}.
Each 2-cell of the complex $\overline{X(5)}$ corresponds to a connected component of $X(5)$, that is, a configuration of five distinct points, say 0, 1, 2, 3, 4, on $S^1$.
In the following, for example, the 2-cell corresponding to the configuration of points $0, 1, 2, 3, 4$ in clockwise order is coded by $[1, 2, 3, 4]$.
Note that $[1, 2, 3, 4]$ and $[4, 3, 2, 1]$ refer to the same 2-cell.
Then, there are twelve 2-cells in $\overline{X(5)}$: 
\[
\begin{array}{c}
\ [1234] , \quad [2134] , \quad [4123] , \quad [1324] , \quad [2341] , \quad [1243] ,   \\ 
\ [3124] , \quad [3412] , \quad [3241] , \quad [2314] , \quad [2431] , \quad [3142] .
\end{array}
\]
Two 2-cells are adjacent in $\overline{X(5)}$ if one is obtained from the other by switching a pair of adjacent points.
See Figure~\ref{adjacent_juzu} for an example.
Note that the 2-cells $[1234]$ and $[4123] = [3214]$ are adjacent; this corresponds to switching the points $0$ and $4$.

    \begin{figure}
        \centering
    \begin{overpic}[width=0.5\textwidth, angle=90]{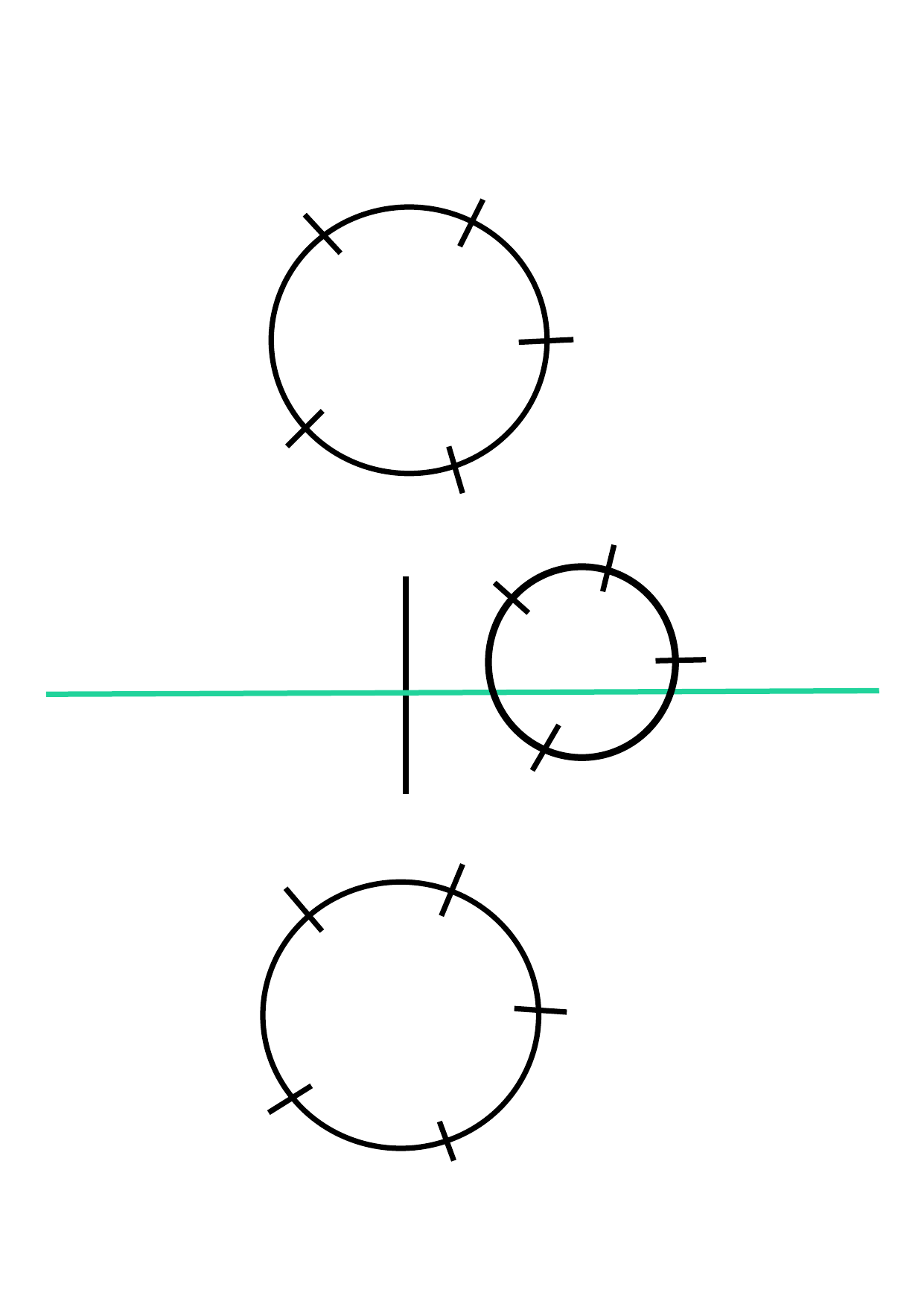}
        \put(25,45){$0$}
        \put(13,38){$4$}
        \put(36,38){$1$}
        \put(36,20){$2$}
        \put(13,20){$3$}

        \put(23,11){$[1234]$}
        \put(76,11){$[2134]$}

        \put(50,55){$0$}
        \put(57,38){\small{$1=2$}}
        \put(40,48){$4$}
        \put(42,37){$3$}

        \put(76,45){$0$}
        \put(90,35){$2$}
        \put(87,20){$1$}
        \put(66,20){$3$}
        \put(64,34){$4$}
    \end{overpic}
        \caption{Adjacent 2-cells; $[1234]$ and $[2134]$}\label{adjacent_juzu}
    \end{figure}

Then, the entire cell complex structure and its dual of $\overline{X(5)}$ are visualized in Figure~\ref{overline_X(5)_its_dual}.
In the figure, the 12 pentagons represent the 2-cells of $\overline{X(5)}$ (i.e., the connected components of $X(5)$).

\begin{figure}[htb]
\vspace{-1cm}
\centering
\begin{overpic}[width=.7\textwidth]{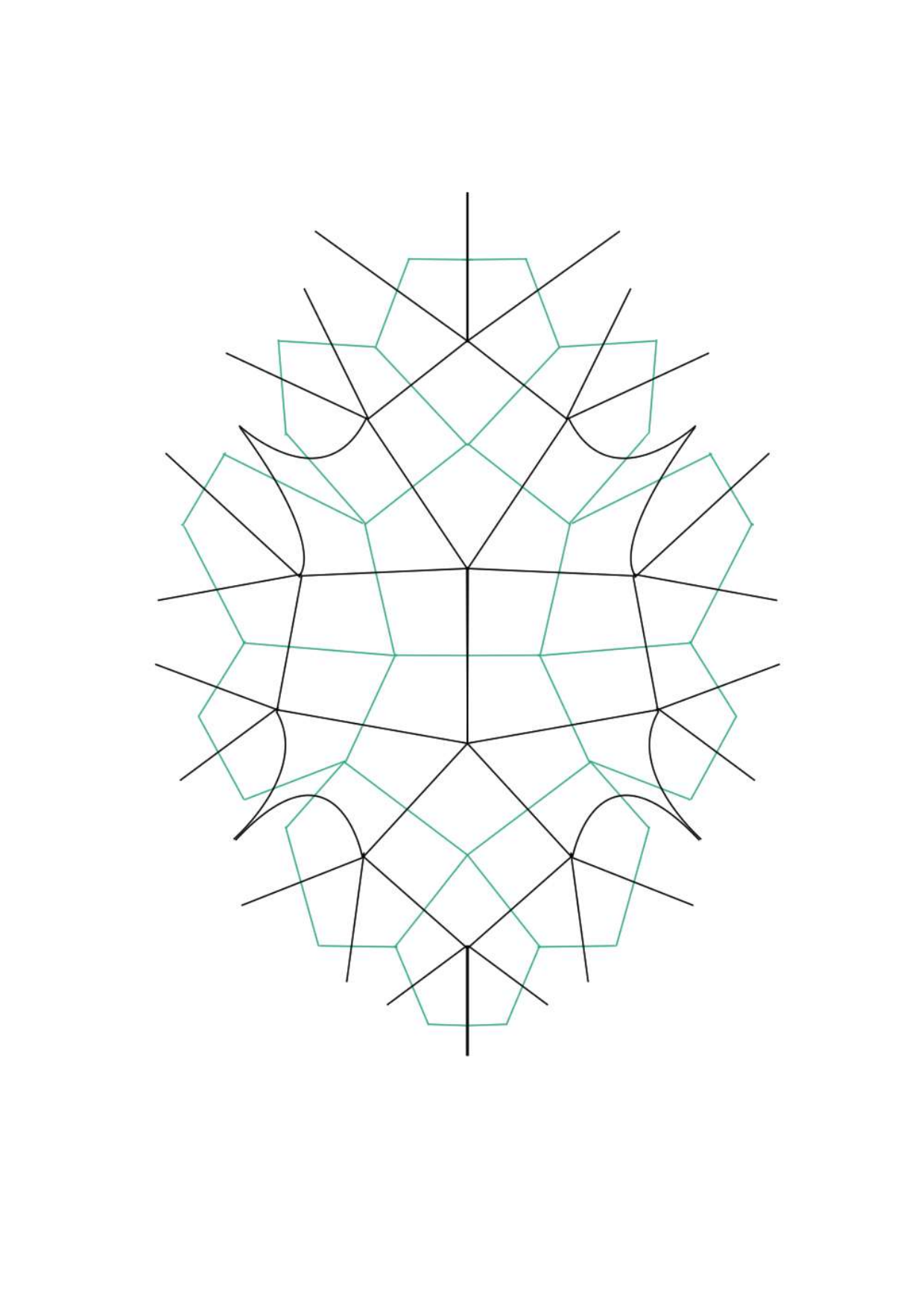}

        \put(22,83){$[3124]$}
        \put(33,87){$[1342]$}
        \put(45,83){$[2314]$}

        \put(20,78){$[1243]$}
        \put(45,78){$[1432]$}
        
        \put(14,74){$[1324]$}
        \put(52,74){$[2134]$}

        \put(15,68){$[1432]$}
        \put(51,68){$[1243]$}

        \put(9,65){$[3142]$}
        \put(57,65){$[3142]$}

        \put(6,54){$[2134]$}
        \put(60,54){$[1324]$}

        \put(6,48){$[3124]$}
        \put(60,48){$[2314]$}

        \put(8,40){$[1342]$}
        \put(58,40){$[1342]$}

        \put(14,34){$[3241]$}
        \put(54,34){$[3412]$}

        \put(12,30){$[2314]$}
        \put(54,30){$[3124]$}

        \put(22,24){$[3412]$}
        \put(45,24){$[3241]$}

        \put(24,21){$[2134]$}
        \put(41,21){$[1324]$}

        \put(33,18){$[3142]$}

        \put(33,76){$[3142]$} 

            \put(26,67){$[3241]$}
            \put(40,67){$[3412]$}

            \put(20,56){$[2314]$}
            \put(33,57){$[3214]$}
            \put(45,56){$[3124]$}

            \put(19,47){$[1324]$}
            \put(33,45){$[1234]$}
            \put(48,47){$[2134]$}

            \put(24,31){$[1432]$}
            \put(33,25){$[1342]$}
            \put(42,31){$[1243]$}
\end{overpic}
\vspace{-2cm}
\caption{Cell complex structure of $\overline{X(5)}$ and its dual.}
    \label{overline_X(5)_its_dual}
\end{figure}

In the following proof, we consider the dual cell complex structure of $\overline{X(5)}$. 
That is, the vertices (0-cells) are labeled by $[1234]$, $[2134]$, and so on. 

\begin{proof}[Proof of Theorem~\ref{PJ4 and X5}]
We consider the cell complex structure on the surface $F:=C_4^{[2,3]}/PJ_4$ induced naturally from $C_4^{[2,3]}$. 
See Figure~\ref{derichlet}. 
There are 15 2-cells (quadrangles) and 30 1-cells (edges).

Set the following correspondence $\varphi$ between 
$(\overline{X(5)})^0$ and $(C_4^{[2,3]}/PJ_4)^0$. 
\[
\begin{array}{clll}
        &[1234] & \longmapsto & [e] \\
        &[2134] & \longmapsto & [s_{12}] \\
        &[4123] & \longmapsto & [s_{13}] \\
        &[1324] & \longmapsto & [s_{23}] \\
        &[2341] & \longmapsto & [s_{24}] \\
        &[1243] & \longmapsto & [s_{34}] \\
        &[3124] & \longmapsto & [s_{13}s_{23}] \\
        &[3412] & \longmapsto & [s_{13}s_{24}] \\
        &[3241] & \longmapsto & [s_{13}s_{34}] \\
        &[2314] & \longmapsto & [s_{13}s_{12}] \\
        &[2431] & \longmapsto & [s_{24}s_{23}] \\
        &[3142] & \longmapsto & [s_{13}s_{24}s_{23}] \\

\end{array}
\]

Let us show that this correspondence induces a well-defined, bijective cell map between $\overline{X(5)}$ and $F=C_4^{[2,3]}/PJ_4$. 

It is sufficient to prove that each set of vertices that span a cell in $\overline{X(5)}$ is mapped to a set of vertices that span a cell in $F$.

We list all of the $2$-cells in $\overline{X(5)}$:
\[
\begin{array}{cc}
     \langle [3124], [1324], [2314], [2134] \rangle, &
     \langle [3124], [3142], [1342], [1324] \rangle, \\
     \langle [3214], [3142], [1342], [1324] \rangle, &
     \langle [3412], [1243], [3124], [3214] \rangle, \\
     \langle [3412], [2134], [1234], [1243] \rangle, &
     \langle [3412], [1432], [1342], [2134] \rangle, \\
     \langle [3142], [4132], [1432], [3412] \rangle, &
     \langle [3142], [1342], [2134], [4132] \rangle, \\
     \langle [4231], [3241], [1243], [1342] \rangle, &
     \langle [1324], [1234], [1432], [3241] \rangle, \\
     \langle [1432], [3241], [3214], [2314] \rangle, &
     \langle [1342], [1243], [1234], [1432] \rangle, \\
     \langle [1234], [2134], [3124], [3214] \rangle, &
     \langle [1234], [1324], [2314], [3214] \rangle, \\
     \langle [3214], [3412], [3142], [3241] \rangle. 
\end{array}
\]

In the above list, for example, $\langle [3124], [1324], [2314], [2134] \rangle$ indicates a cycle of the vertices in this order.
See Figure~\ref{overline_X(5)_its_dual}.
For instance, let us consider the case of the $2$-cell $\langle [3124], [1324], [2314], [2134] \rangle$ in $\overline{X(5)}$.
The vertices $[3124]$, $[1324]$, $[2314]$, and $[2134]$ correspond to $[s{12}]$, $[s_{13}s_{12}]$, $[s_{23}]$, and $[s_{13}s_{23}]$ in $C_4^{[2,3]}$ by $\varphi$, respectively.

By the definition of the Cayley graph, we have:
$
\langle [s_{12}], [s_{13}s_{23}] \rangle 
=
\langle [s_{12}], [s_{12}s_{13}] \rangle ,
$ 
and 
$
\langle [s_{13}s_{12}], [s_{23}] \rangle
=
\langle [s_{23}s_{13}], [s_{23}] \rangle ,
$ 
which are edges in $C_4^{[2,3]}$.

The element $s_{13}s_{12}$ is mapped to $s_{12}s_{23}$ under the identification $g_5^{-1}$ as follows. 
\[
  g_5^{-1}  s_{13}s_{12}  
  = (s_{23}s_{12}s_{23}s_{13} )^{-1} s_{13}s_{12}  
  = s_{12}s_{23}s_{12}s_{13}s_{13}s_{12}  = s_{12}s_{23} 
\]
Therefore, $\langle [s_{12}], [s_{13}s_{12}] \rangle$ becomes a $1$-cell in $F$, since $\{ s_{12}, s_{12}s_{23} \}$ is an edge of the Cayley graph of $J_4^{[2,3]}$ and $[s_{13}s_{12}] = [s_{12}s_{23}]$ holds in $F$.
In the same way, since $g_5(s_{13}s_{23}) = s_{23}s_{12}$, the pair $\langle [s_{23}], [s_{13}s_{23}] \rangle$ also forms a $1$-cell in $F$.
It follows that the vertices $[s_{12}]$, $[s_{13}s_{12}]$, $[s_{23}]$, and $[s_{13}s_{23}]$ form a cycle in $F$.
By the definition of the Cayley complex, these vertices span a $2$-cell:
$\langle [s_{12}], [s_{13}s_{12}], [s_{23}], [s_{13}s_{23}] \rangle$ in $F$. 

The following calculations show that the other $2$-cells are similarly mapped to $2$-cells in $F$.
 
 \noindent
$\langle [3124], [3142], [1342], [1324] \rangle$: 
 \[
 \begin{array}{l}
      \varphi([3124]) = [s_{13}s_{23}] = [g_5^{-1} (s_{23}s_{12})] = [g_1 (s_{34}s_{13})]   \\ 
      \varphi([3142]) = [s_{13}s_{24}s_{23}] = [g_3(s_{34}s_{13}s_{12})] =[g_3 (g_6^{-1} (s_{23}s_{34}s_{12}))] \\ 
      \varphi([1342]) = [s_{24}s_{23}] = [g_8^{-1} (s_{23}s_{34})] \\ 
      \varphi([1324]) = [s_{23}]\\
\end{array}
\]
$\langle [3214], [3142], [1342], [1324] \rangle$: 
\[
\begin{array}{l}
      \varphi([3124]) = [s_{13}s_{23}] = [g_1 (s_{34}s_{13})] \\ 
      \varphi([1243]) = [s_{34}] \\ 
      \varphi([3214]) = [s_{13}s_{34}] = [g_3 (s_{34}s_{23})] \\ 
      \varphi([3142]) = [g_3 (s_{34}s_{13}s_{12})]
 \end{array}
 \]
$\langle [3412], [1243], [3124], [3214] \rangle$: 
\[
\begin{array}{l}
      \varphi([3412]) = [s_{13}s_{24}] = [g_1 (s_{34}s_{12})] \\ 
      \varphi([1243]) = [s_{34}] \\ 
      \varphi([3124]) = [s_{13}s_{23}] = [g_1 (s_{34}s_{13})] \\ 
      \varphi([3214]) = [s_{13})]
 \end{array}
 \]
$\langle [3412], [2134], [1234], [1243] \rangle$: 
 \[
\begin{array}{l}
      \varphi([3412]) = [s_{13}s_{24}] = [g_1 (s_{12}s_{34})] \\ 
      \varphi([2134]) = [s_{12}] \\ 
      \varphi([1234]) = [e] \\ 
      \varphi([1243]) = [s_{34}]
 \end{array}
 \]
$\langle [3412], [1432], [1342], [2134] \rangle$: 
 \[
\begin{array}{l}
      \varphi([3412]) = [s_{13}s_{24}] = [g_2 (s_{24}s_{13})] = [g_1 (s_{12}s_{34})] \\ 
      \varphi([1432]) = [s_{24}] \\ 
      \varphi([1342]) = [s_{24}s_{23}] = [g_{10} (s_{12}s_{24})]\\ 
      \varphi([2134]) = [s_{12}]
 \end{array}
 \]
 $ \langle [3142], [4132], [1432], [3412] \rangle$: 
 \[
\begin{array}{l}
      \varphi([3142]) = [s_{13}s_{24}s_{23}] = [g_2 (s_{24}s_{13}s_{23})] \\ 
      \varphi([4132]) = [s_{13}s_{12}] = [g_4^{-1} (s_{24}s_{12})] \\ 
      \varphi([1432]) = [s_{24}] = [g_{10} (s_{12}s_{24})]\\ 
      \varphi([3412]) = [s_{13}s_{24}] = [g_2 (s_{24}s_{13})]
 \end{array}
 \]
$\langle [3142], [1342], [2134], [4132] \rangle$: 
 \[
\begin{array}{l}
      \varphi([3142]) = [s_{13}s_{24}s_{23}] = [g_2 (s_{24}s_{13}s_{23})] = [g_2 (g_9 (s_{12}s_{24}s_{34}))] \\ 
      \varphi([1342]) = [s_{24}s_{23}] = [g_{10}(s_{12}s_{24})] \\ 
      \varphi([2134]) = [s_{12}] \\
      \varphi([4132]) = [s_{13}s_{12}] = [g_5 (s_{12}s_{23})] = [g_4 (s_{24}s_{12})] 
 \end{array}
 \]
$\langle [4231], [3241], [1243], [1342] \rangle$:
 \[
\begin{array}{l}
      \varphi([4231]) = [s_{23}] \\ 
      \varphi([3241]) = [s_{13}s_{34}] = [g_{4}(s_{23}s_{24})] =[g_3 (s_{34}s_{23})] \\ 
      \varphi([1243]) = [s_{34}] \\
      \varphi([1342]) = [s_{24}s_{23}] = [g_8^{-1} (s_{23}s_{34})] 
 \end{array}
 \]
$\langle [1324], [1234], [1432], [3241] \rangle$:
 \[
\begin{array}{l}
      \varphi([1324]) = [s_{23}] \\ 
      \varphi([1234]) = [e] \\ 
      \varphi([1432]) = [s_{34}] \\
      \varphi([3241]) = [s_{13}s_{34}] = [g_3 (s_{34}s_{23})] = [g_4 (s_{24}s_{34})] 
 \end{array}
 \]
 $\langle [1432], [3241], [3214], [2314] \rangle$:
 \[
\begin{array}{l}
      \varphi([1432]) = [s_{24}] \\ 
      \varphi([3241]) = [s_{13}s_{34}] = [g_4 (s_{24}s_{34})] \\ 
      \varphi([3214]) = [s_{13}] \\
      \varphi([2314]) = [s_{13}s_{12}] = [g_4 (s_{24}s_{12})] 
 \end{array}
 \]
$\langle [1342], [1243], [1234], [1432] \rangle$: 
 \[
\begin{array}{l}
      \varphi([1342]) = [s_{24}s_{23}] \\ 
      \varphi([1243]) = [s_{34}] \\ 
      \varphi([1234]) = [e] \\ 
      \varphi([1432]) = [s_{24}] 
 \end{array}
 \]
$\langle [1234], [2134], [3124], [3214] \rangle$: 
 \[
\begin{array}{l}
      \varphi([1234]) = [e] \\ 
      \varphi([2134]) = [s_{12}] \\ 
      \varphi([3124]) = [s_{13}s_{23}] \\ 
      \varphi([3214]) = [s_{13}]
\end{array}
 \]
$\langle [1234], [1324], [2314], [3214] \rangle$: 
 \[
\begin{array}{l}
      \varphi([1234]) = [e] \\ 
      \varphi([1324]) = [s_{23}] \\ 
      \varphi([2314]) = [s_{13}s_{12}] \\ 
      \varphi([3214]) = [s_{13}]
\end{array}
 \]
$\langle [3214], [3412], [3142], [3241] \rangle$: 
 \[
\begin{array}{l}
      \varphi([3214]) = [s_{13}] \\ 
      \varphi([3412]) = [s_{13}s_{24}] \\ 
      \varphi([3142]) = [s_{13}s_{34}] \\ 
      \varphi([3241]) = [s_{13}s_{24}s_{23}]
\end{array}
 \]

Thus, each $2$-cell in $\overline{X(5)}$ is mapped to a $2$-cell in $F$ under $\varphi$, and hence $\varphi$ defines a bijective, cellular map.
Consequently, $\varphi$ is a homeomorphism.
\end{proof}

\section{Questions}

In this section, we collect some open questions. 

It is shown in \cite[Proposition 6.1]{genevois2022cactusgroupsviewpointgeometric} 
that the cactus group $J_n$ is not hyperbolic for any $n \geq 6$.
For small values of $n$, the hyperbolicity of $J_n$ is well understood:
\begin{itemize}
\item $J_2 \cong \mathbb{Z}/2\mathbb{Z}$,
\item $J_3$ is virtually the infinite dihedral group 
which is isomorphic to $\mathbb{Z}/2\mathbb{Z} * \mathbb{Z}/2\mathbb{Z}$,
\item $J_4$ is virtually a hyperbolic surface group,
\end{itemize}
and all of these are hyperbolic.
(See, for instance, \cite{genevois2022cactusgroupsviewpointgeometric,BCL24}.)

Thus, it is natural to consider the following question.

\begin{ques}[{\cite[Question 8.7]{genevois2022cactusgroupsviewpointgeometric}}]
Are the cactus group $J_5$ and the pure cactus group $PJ_5$ hyperbolic?
\end{ques}

In \cite[Theorem 9]{HENRIQUES-KAMNITZER}, Henriques and Kamnitzer showed that 
$PJ_n$ is isomorphic to the fundamental group of 
the Deligne--Mumford compactification $\overline{M_{0,n+1}}(\mathbb{R})$ 
of the moduli space of real genus-zero curves with $n+1$ marked points.
Therefore, the following question is closely related to 
the geometric structure of $\overline{M_{0,6}}(\mathbb{R})$, 
which is known to be a non-orientable closed smooth 3-manifold. 

\begin{ques}
Is the 3-manifold whose fundamental group is isomorphic to $PJ_5$ 
a hyperbolic 3-manifold?
\end{ques}

On the other hand, since non-elementary 
({\it i.e.}, infinite and not virtually cyclic) hyperbolic groups 
have always exponential growth rate \cite{GhysdelaHarpe}, 
the following question also arises naturally in connection with 
the above question.

\begin{ques}
Can we determine the growth rate of the cactus group, in particular $J_5$?
\end{ques}

It is also known that, for any finitely generated hyperbolic group, 
the generating function encoding its growth function is rational 
(see, for instance, \cite{GhysdelaHarpe}).
This leads to another natural question:

\begin{ques}
Can we describe the generating function that represents 
the growth function of the cactus group $J_5$?
\end{ques}

\section*{Acknowledgments}
The authors would like to thank Takuya Sakasai for his valuable feedback and Carl-Fredrik Nyberg Brodda for inspiring future research directions. 
They also thank Tetsuya Ito for his comments on the questions in the last section.

\bibliographystyle{amsalpha}

\providecommand{\bysame}{\leavevmode\hbox to3em{\hrulefill}\thinspace}
\providecommand{\MR}{\relax\ifhmode\unskip\space\fi MR }
\providecommand{\MRhref}[2]{%
  \href{http://www.ams.org/mathscinet-getitem?mr=#1}{#2}
}
\providecommand{\href}[2]{#2}

\end{document}